\newcommand{\bb}{\ensuremath{\mathbb B}}
\newcommand{\be}{\ensuremath{\mathbb E}}
\newcommand{\N}{\ensuremath{\mathbb N}}
\newcommand{\G}{\ensuremath{\mathscr G}}
\newcommand{\bp}[1]{\ensuremath{\mathbb P}_\mu \left( #1 \right)}
\newcommand{\conv}[1]{\ensuremath{\stackrel{#1}{\longrightarrow}} }
\newtheorem{theorem}[equation]{Theorem}
\newtheorem{prop}[equation]{Proposition}
\newtheorem{lemma}[equation]{Lemma}
\newtheorem{utheorem}{\textrm{\textbf{Theorem}}}
\theoremstyle{definition}
\newtheorem{remark}[equation]{Remark}
\newtheorem{definition}[equation]{Definition}
\numberwithin{equation}{section}
\begin{document}
\title[Probability inequalities and tail estimates for metric
semigroups]{Probability inequalities and tail estimates\\ for metric
semigroups}

\author{Apoorva Khare}
\address[A.~Khare]{Department of Mathematics, Indian Institute of
Science; Analysis and Probability Research Group; Bangalore, India}
\email{khare@iisc.ac.in}

\author{Bala Rajaratnam}
\address[B.~Rajaratnam]{University of California, Davis, USA and
University of Sydney, Australia (visiting)}
\email{brajaratnam01@gmail.com}

\thanks{This work was partially supported by the following: US Air Force
Office of Scientific Research grant award FA9550-13-1-0043, US National
Science Foundation under grant DMS-0906392, DMS-CMG 1025465, AGS-1003823,
DMS-1106642, DMS-CAREER-1352656, Defense Advanced Research Projects
Agency DARPA YFA N66001-111-4131, the UPS Foundation, SMC-DBNKY, 
Ramanujan Fellowship SB/S2/RJN-121/2017 and MATRICS grant MTR/2017/000295
from SERB (Govt.~of India), by grant F.510/25/CAS-II/2018(SAP-I) from UGC
(Govt.~of India), and a Young Investigator Award from the Infosys
Foundation.}

\subjclass[2010]{Primary: 60E15, Secondary: 60B15, 60B10}

\keywords{Metric semigroup, maximal inequality, L\'evy inequality,
Ottaviani--Skorohod inequality, Mogul'skii inequality, L\'evy
equivalence, tail estimate, moment estimate, decreasing rearrangement,
universal constant}

\date{\today}

\begin{abstract}
We study probability inequalities leading to tail estimates in a general
semigroup $\G$ with a translation-invariant metric $d_\G$. (An important
and central example of this in the functional analysis literature is that
of $\G$ a Banach space.)
Using our prior work [{\it Ann.~Prob.} 2017] that extends the
Hoffmann-J{\o}rgensen inequality to all metric semigroups, we obtain tail
estimates and approximate bounds for sums of independent semigroup-valued
random variables, their moments, and decreasing rearrangements. In
particular, we obtain the ``correct'' universal constants in several
cases, extending results in the Banach space literature by
Johnson--Schechtman--Zinn [{\it Ann.~Prob.} 1985], Hitczenko [{\it
Ann.~Prob.} 1994], and Hitczenko and Montgomery-Smith [{\it Ann.~Prob.}
2001]. Our results also hold more generally, in a very primitive
mathematical framework required to state them: metric semigroups $\G$.
This includes all compact, discrete, or (connected) abelian Lie groups.
\end{abstract}
\maketitle

\section{Introduction and main results}

This paper follows our prior work~\cite{KR3} and continues the study of
probability theory beyond -- but also subsuming -- the Banach space
setting. In the present work, we estimate sums of independent random
variables in several ways, under very primitive mathematical assumptions
that suffice to state our results. The setting is as follows.

\begin{definition}\label{Dsemi}
A \textit{metric semigroup} is defined to be a semigroup $(\G, \cdot)$
equipped with a metric $d_\G : \G \times \G \to [0,\infty)$ that is
translation-invariant. In other words,
\[ d_\G(ac,bc) = d_\G(a,b) = d_\G(ca,cb)\ \forall a,b,c \in \G. \]

\noindent (Equivalently, $(\G,d_\G)$ is a metric space equipped with a
associative binary operation such that $d_\G$ is translation-invariant.)
Similarly, one defines a \textit{metric monoid} and a \textit{metric
group}.
\end{definition}

Metric groups are ubiquitous in probability theory and functional
analysis, and subsume all normed linear spaces as well as compact and
(connected) abelian Lie groups as special cases. More modern examples of
recent interest are mentioned presently.

Now suppose $(\Omega, \mathscr{A}, \mu)$ is a probability space and $X_1,
\dots, X_n \in L^0(\Omega,\G)$ are $\G$-valued random variables. Fix
$z_0, z_1 \in \G$ and define for $1 \leqslant j \leqslant n$:
\begin{equation}\label{Esemigroup}
S_j := X_1 X_2 \cdots X_j, \quad
U_j := \max_{1 \leqslant i \leqslant j} d_\G(z_1, z_0 S_i), \quad
Y_j := d_\G(z_0, z_0 X_j), \quad
M_j := \max_{1 \leqslant i \leqslant j} Y_i.
\end{equation}

In this paper we discuss bounds that govern the behavior of $U_n$ -- and
consequently, of sums $S_n$ of independent $\G$-valued random variables
$X_j$ -- in terms of the variables $X_j$, and even $Y_j$ or $M_j$.
We are interested in a variety of bounds:
(a)~one-sided geometric tail estimates;
(b)~approximate two-sided bounds for tail probabilities;
(c)~approximate two-sided bounds for moments; and
(d)~comparison of moments.
For instance, is it possible to obtain bounds for $\be_\mu[U_n^p]^{1/p}$
in terms of the tail distribution for $U_n$, or in terms of
$\be_\mu[U_n^q]^{1/q}$ for $p,q > 0$? The latter question has been
well-studied in the literature for Banach spaces, and universal bounds
that grow at the ``correct'' rate have been obtained for all $q \gg 0$.
We explore the question of obtaining correctly growing universal
constants for metric semigroups, which include not only normed linear
spaces and inner product spaces, but also all connected abelian and
compact Lie groups. Our results show that the universal constants in such
inequalities do not depend on the semigroup in question.

\subsection{Motivations}

Our motivations in developing probability theory in such general settings
are both modern and classical. An increasing number of modern-day
theoretical and applied settings require mathematical frameworks that go
beyond Banach spaces. For instance, data and random variables may take
values in manifolds such as (real or complex) Lie groups. Compact or
connected abelian Lie groups also commonly feature in the literature,
including permutation groups and other finite groups, lattices,
orthogonal groups, and tori.
In fact every abelian, Hausdorff, metrizable, topologically complete
group $G$ admits a translation-invariant metric \cite{Kl}, though this
fails to hold for cancellative semigroups \cite{KG}.
Certain classes of amenable groups are also metric groups (see \cite{KR4}
for more details).
Other modern examples arise in the study of large networks and include
the space of graphons with the cut norm, which arises naturally out of
combinatorics and is related to many applications \cite{Lo}. In a
parallel vein, the space of labelled graphs $\G(V)$ on a fixed vertex set
$V$ is a $2$-torsion metric group (see \cite{KR1,KR2}), hence does not
embed into a normed linear space.

With the above settings in mind, in this paper we develop novel
techniques for proving maximal inequalities -- as well as comparison
results between tail distributions and various moments -- for sums of
independent random variables taking values in the aforementioned groups,
which need not be Banach spaces.

At the same time, we also have theoretical motivations in mind when
developing probability theory on non-linear spaces such as $\G(V)$ and
beyond. Throughout the past century, the emphasis in probability has
shifted somewhat from proving results on stochastic convergence, to
obtaining sharper and stronger bounds on random sums, in increasingly
weaker settings. A celebrated achievement of probability theory has been
to develop a rigorous and systematic framework for studying the behavior
of sums of (independent) random variables; see e.g.~\cite{LT}. In this
vein, we provide \textit{unifications} of our results on graph space with
those in the Banach space literature, by proving them in a more primitive
mathematical framework encompassing both of these (and other) settings.
In particular, our results apply to compact/abelian/discrete Lie groups,
as well as normed linear spaces.

For example, maximal inequalities by Hoffmann-J{\o}rgensen, L\'evy,
Ottaviani--Skorohod, and Mogul'skii require merely the notions of a
metric and a binary associative operation to state them. Thus one only
needs a separable metric semigroup $\G$ rather than a Banach space to
state these inequalities. However, note that working in a metric
semigroup raises technical questions. For instance, the lack of an
identity element means one has to specify how to compute magnitudes of
$\G$-valued random variables (before trying to bound or estimate them);
also, it is not apparent how to define truncations of random variables.
The lack of inverses, norms, or commutativity implies in particular that
one cannot rescale or subtract random variables.

In the present work, we explain how to overcome these challenges. We also
hope to show that the approach of working with arbitrary metric
semigroups turns out to be richly rewarding in
(i)~obtaining the above (and other) results for non-Banach settings;
(ii)~unifying these results with the existing Banach space results in
order to hold in the greatest possible generality; and
(iii)~further strengthening these unified versions where possible.

\subsection{Organization and results}

We now describe the organization and contributions of the present paper.
In Section~\ref{Slevy} we prove the Mogul'skii--Ottaviani--Skorohod
inequalities for all metric semigroups $\G$. As an application, we show
L\'evy's equivalence for stochastic convergence in metric semigroups.

In Section~\ref{Stails}, we come to our main goal in this paper, of
estimating and comparing moments and tail probabilities for sums of
independent $\G$-valued random variables. Our main tool is a variant of
Hoffmann-J{\o}rgensen's inequality for metric semigroups, which is shown
in recent work \cite{KR3}. The relevant part for our purposes is now
stated.

\begin{theorem}[Khare and Rajaratnam, \cite{KR3}]\label{Thj}
Notation as in Definition~\ref{Dsemi} and Equation~\eqref{Esemigroup}.
Suppose $X_1, \dots$, $X_n \in L^0(\Omega,\G)$ are independent.
Fix scalars
\[
n_1, \dots, n_k \in \N, \qquad t_1, \dots, t_k, s \in
[0,\infty),
\]
and define
\[
I_0 := \{ 1 \leqslant i \leqslant k : \bp{U_n \leqslant t_i}^{n_i -
\delta_{i1}} \leqslant 1/n_i! \},
\]
where $\delta_{i1}$ denotes the Kronecker delta.
Now if $\sum_{i=1}^k n_i \leqslant n+1$, then:
\begin{align*}
&\ \bp{U_n > (2 n_1 - 1) t_1 + 2 \sum_{i=2}^k n_i t_i + \left(
\sum_{i=1}^k n_i - 1 \right) s}\\
\leqslant &\ \bp{M_n > s} + \bp{U_n \leqslant t_1}^{{\bf 1}_{1 \notin
I_0}} \prod_{i \in I_0} \bp{U_n > t_i}^{n_i} \prod_{i \notin I_0}
\frac{1}{n_i!} \left( \frac{\bp{U_n > t_i}}{\bp{U_n \leqslant t_i}}
\right)^{n_i}.
\end{align*}
\end{theorem}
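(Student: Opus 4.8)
The plan is to adapt the first-passage (stopping-time) proof of the Hoffmann--J\o{}rgensen inequality to the running maximum $U_n$, and then to iterate it across the prescribed thresholds $t_1,\dots,t_k$ with their multiplicities $n_1,\dots,n_k$. First I would pass to the ``no large jump'' event: writing $\{M_n \leqslant s\}$ for the event that every increment satisfies $Y_i \leqslant s$, I would split each probability on the left as $\bp{M_n > s}$ plus its intersection with $\{M_n \leqslant s\}$. This is exactly why a single term $\bp{M_n > s}$ appears on the right, and all the remaining work then takes place on the event $\{M_n \leqslant s\}$ where no increment moves $z_0$ by more than $s$.

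Next I would introduce, for each threshold $t_i$, the successive first-passage times of the process $i \mapsto d_\G(z_1, z_0 S_i)$ above $t_i$, i.e.\ the times at which a fresh block of indices first pushes the running distance past $t_i$. The core semigroup step is to transfer an excursion of a tail product $X_{j+1}\cdots X_m$ --- which is naturally measured from the crossing point $z_0 S_j$ --- back to an excursion measured from $z_1$. Since $\G$ has no identity or inverses, one cannot cancel $z_0 S_j$; instead I would use the two-sided translation invariance $d_\G(ca,cb) = d_\G(a,b) = d_\G(ac,bc)$ together with the triangle inequality to bound $d_\G(z_1, z_0 S_m)$ by $d_\G(z_1, z_0 S_j)$ plus the excursion of the tail product, at the cost of one extra factor of $t_i$ per crossing. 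Conditionally on the crossing time, the tail product is independent of $(X_1,\dots,X_j)$, so the probabilities factorize across the disjoint index-blocks, and the Ottaviani--Skorohod inequality proved in Section~\ref{Slevy} converts the running maximum of each tail product into its endpoint wherever that is needed to close the factorization.

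The accounting of constants then falls out of this decomposition. Each of the $n_i$ crossings of $t_i$ contributes one $t_i$ ``going out'' and one ``coming back'' (hence the factor $2$), the very first crossing of $t_1$ needs no return leg (hence $2n_1 - 1$ rather than $2n_1$, and the leading factor $\bp{U_n \leqslant t_1}^{\mathbf{1}_{1 \notin I_0}}$), and each crossing after the first absorbs a single overshoot jump bounded by $s$ on $\{M_n \leqslant s\}$ (hence the $\left(\sum_i n_i - 1\right)s$). For the multiplicative part I would count the crossings as ordered tuples of disjoint blocks and apply the elementary symmetric-polynomial estimate $e_{n_i} \leqslant \frac{1}{n_i!}(\cdot)^{n_i}$, which produces both the $\frac{1}{n_i!}$ and --- after bounding each conditional crossing probability in a fresh block by a Skorohod-type ratio --- the factor $\bp{U_n > t_i}/\bp{U_n \leqslant t_i}$. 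The set $I_0$ is precisely the bookkeeping device selecting, threshold by threshold, whichever of the two available estimates (the unconditional $\bp{U_n > t_i}^{n_i}$ or the conditional-ratio form) is smaller, while the Kronecker $\delta_{i1}$ records that the first threshold has already spent one of its factors.

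I expect the main obstacle to be exactly the step where identity and inverses are unavailable: controlling the single overshoot jump $d_\G(z_0 S_{j-1}, z_0 S_j)$ at each crossing time. In a group this equals $Y_j$ after left-cancellation, but in a bare metric semigroup $d_\G(z_0 S_{j-1}, z_0 S_{j-1} X_j)$ genuinely depends on the running product $z_0 S_{j-1}$ and need not equal $Y_j = d_\G(z_0, z_0 X_j)$. The delicate part of the argument will therefore be to show, using only the triangle inequality and the two-sided invariance, that this overshoot can still be dominated through the single quantity $M_n$ (at the price of the factors of $2$ on the thresholds), so that the clean form of the bound --- with only one $\bp{M_n > s}$ term and tails of $U_n$ alone on the right --- survives the passage from Banach spaces to arbitrary metric semigroups.
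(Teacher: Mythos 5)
This theorem is not proved in the present paper at all: it is quoted verbatim from the companion work \cite{KR3} (``The Hoffmann-J{\o}rgensen inequality in metric semigroups''), so there is no in-paper proof to compare your attempt against. Judged on its own terms, your plan identifies the correct strategy --- split off $\bp{M_n > s}$, decompose by successive first-passage times of $i \mapsto d_\G(z_1, z_0 S_i)$ over the thresholds $t_i$, convert tail-block excursions back to $U_n$-events via translation invariance and the triangle inequality, and get the $1/n_i!$ by summing over ordered tuples of disjoint crossing blocks. But as written it is a plan, not a proof: the genuinely hard content here, beyond the single-threshold Klass--Nowicki argument, is the simultaneous iteration over $k$ thresholds with multiplicities $n_1,\dots,n_k$, the precise ledger that produces $(2n_1-1)t_1 + 2\sum_{i\geqslant 2} n_i t_i + (\sum_i n_i - 1)s$, and the case analysis that lets each threshold independently take whichever of the two bounds ($\bp{U_n>t_i}^{n_i}$ versus the ratio form) is available --- which is what the set $I_0$ and the exponent ${\bf 1}_{1\notin I_0}$ encode. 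None of that is carried out, and the invocation of the Ottaviani--Skorohod inequality is a misattribution: the passage from the tail maximum $\max_{j<m\leqslant n} d_\G(z_0 S_j, z_0 S_m)$ back to $U_n$ comes from $d_\G(z_0 S_j, z_0 S_m) \leqslant d_\G(z_1,z_0 S_j) + d_\G(z_1, z_0 S_m) \leqslant 2U_n$, and the ratio $\bp{U_n>t_i}/\bp{U_n\leqslant t_i}$ comes from dividing out the pre-crossing event in the first-passage decomposition, not from Proposition~\ref{Pmog}.

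More importantly, the step you single out as the ``main obstacle'' is not an obstacle. You worry that the overshoot $d_\G(z_0 S_{j-1}, z_0 S_{j-1} X_j)$ ``genuinely depends on the running product'' in the absence of inverses. It does not: by Proposition~\ref{Cstrict} (Equation~\eqref{Estrict}), the quantity $d_\G(a, ab)$ is independent of $a \in \G$ --- one has $d_\G(a,ab) = d_\G(ab, ab^2) = d_\G(b,b^2) = d_\G(z_0, z_0 b)$ using only two-sided translation invariance --- so the overshoot at a crossing time equals $Y_j$ exactly, with no loss and no extra factors of $2$. The doublings in the threshold come from the tail-to-$U_n$ conversion above, not from controlling the overshoot. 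So your sketch points at the right skeleton but leaves the combinatorial core unproved and misdiagnoses where the semigroup-specific difficulty lies; the correct reference for a complete argument is \cite{KR3}.
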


Remark that Theorem~\ref{Thj} generalizes the original
Hoffmann-J{\o}rgensen inequality in three ways:
(i)~mathematically it strengthens the state-of-the-art even for real
variables;
(ii)~it unifies previous results by Johnson and Schechtman~\cite{JS},
Klass and Nowicki~\cite{KN}, and Hitczenko and Montgomery-Smith~\cite{HM}
in the Banach space literature; and
(iii) the result holds in the most primitive setting needed to state it,
thereby being applicable also to e.g.~Lie groups. 

We now discuss several ways in which to estimate the size of sums of
independent $\G$-valued random variables, for metric semigroups $\G$. We
present two results in this section, corresponding to two of the
estimation techniques discussed in the introduction.
(For a third result, see Theorem~\ref{Tbounds}.)

The \textbf{first} approach, informally speaking, uses the
Hoffmann-J{\o}rgensen inequality to generalize an upper bound for
$\be_\mu[\|S_n\|^p]$ in terms of the quantiles of $\|S_n\|$ as well as
$\be_\mu[M_n^p]$ -- but now in the ``minimal'' framework of metric
semigroups. More precisely, we show that controlling the behavior of
$X_n$ is equivalent to controlling $S_n$ or $U_n$, for all metric
semigroups.

\begin{utheorem}\label{Thj2}
Suppose $A \subset \N$ is either $\N$ or $\{ 1, \dots, N \}$ for some $N
\in \N$. Suppose $(\G, d_\G)$ is a separable metric semigroup, $z_0, z_1
\in \G$, and $X_n \in L^0(\Omega,\G)$ are independent for all $n \in A$.
If $\sup_{n \in A} d_\G(z_1, z_0 S_n) < \infty$ almost surely, then for
all $p \in (0,\infty)$,
\[
\be_\mu \left[ \sup_{n \in A} d_\G(z_0, z_0 X_n)^p \right] < \infty \quad
\Longleftrightarrow \quad \be_\mu \left[ \sup_{n \in A} d_\G(z_1, z_0
S_n)^p \right] < \infty.
\]
\end{utheorem}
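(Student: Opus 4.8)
The plan is to restate the claim in terms of the maximal quantities already introduced. Setting $U_\infty := \sup_{n \in A} d_\G(z_1, z_0 S_n)$ and $M_\infty := \sup_{n \in A} d_\G(z_0, z_0 X_n)$ (both measurable since $\G$ is separable), and using that $t \mapsto t^p$ is increasing on $[0,\infty)$, the assertion is exactly the equivalence $\be_\mu[M_\infty^p] < \infty \iff \be_\mu[U_\infty^p] < \infty$. I would prove the two implications separately, noting in advance that independence and Theorem~\ref{Thj} are needed only for the implication $\be_\mu[M_\infty^p] < \infty \Rightarrow \be_\mu[U_\infty^p] < \infty$; the reverse implication is purely metric.

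For the implication $\be_\mu[U_\infty^p] < \infty \Rightarrow \be_\mu[M_\infty^p] < \infty$, I would prove the deterministic pointwise bound $M_\infty \leqslant 4 U_\infty + 2 d_\G(z_0, z_1)$. The one subtle point -- and what I expect to be the main obstacle -- is that in a genuine semigroup the ``size of $X_j$'', namely $Y_j = d_\G(z_0, z_0 X_j)$, is \emph{not} the increment $d_\G(z_0 S_{j-1}, z_0 S_j)$: these coincide in a group by invariance, but the absence of an identity (so that $z_0$ cannot be replaced by $z_0 S_{j-1}$) makes them genuinely different, and the one-line Banach estimate $\|X_j\| \leqslant \|S_j\| + \|S_{j-1}\|$ has no direct analogue. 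The remedy is to reintroduce the increment at the cost of a translation term: for $j \geqslant 2$, writing $g := z_0 S_{j-1}$ so that $g X_j = z_0 S_j$, the triangle inequality together with \emph{right}-translation invariance (which cancels the common right factor $X_j$ via $d_\G(g X_j, z_0 X_j) = d_\G(g, z_0)$) gives
\[ Y_j \leqslant 2\, d_\G(z_0, z_0 S_{j-1}) + d_\G(z_0 S_{j-1}, z_0 S_j). \]
Both terms are then controlled by $U_\infty$ upon inserting $z_1$: indeed $d_\G(z_0, z_0 S_{j-1}) \leqslant d_\G(z_0, z_1) + U_\infty$ and $d_\G(z_0 S_{j-1}, z_0 S_j) \leqslant d_\G(z_1, z_0 S_{j-1}) + d_\G(z_1, z_0 S_j) \leqslant 2 U_\infty$, while the case $j = 1$ is bounded directly by $d_\G(z_0, z_1) + U_\infty$ since $z_0 S_1 = z_0 X_1$. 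Taking the supremum over $j \in A$ and then $p$-th moments finishes this direction, with no use of independence.

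For the forward implication I would apply Theorem~\ref{Thj} with $k = 1$, $n_1 = 2$, and $t_1 = s = t$ (permissible once $n \geqslant 1$, as $\sum_i n_i = 2 \leqslant n+1$). A short check of the two cases in the definition of $I_0$ -- according as $\bp{U_n \leqslant t}$ is at most or more than $1/2$ -- shows that the second summand is at most $\bp{U_n > t}^2$ in either case, so that
\[ \bp{U_n > 4t} \leqslant \bp{M_n > t} + \bp{U_n > t}^2 \qquad (n \geqslant 1). \]
Because $U_\infty < \infty$ almost surely, I may fix a quantile $t_0$ with $\bp{U_\infty > t_0} \leqslant \tfrac12 4^{-p}$, whence $\bp{U_n > t_0} \leqslant \tfrac12 4^{-p}$ for all $n$. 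I would then integrate the displayed bound against $p \lambda^{p-1}\, d\lambda$ after the substitution $\lambda = 4t$; the resulting integral $\int t^{p-1} \bp{U_n > t}^2\, dt$ splits at $t_0$, contributing at most $t_0^p/p$ on $(0, t_0]$ and, on $(t_0, \infty)$, a factor $\bp{U_n > t} \leqslant \tfrac12 4^{-p}$ that lets it be absorbed into the left-hand side. To make the absorption legitimate I would first run this computation with $U_n$ replaced by its truncation $U_n \wedge R$ (which has finite $p$-th moment), obtaining a bound on $\be_\mu[(U_n \wedge R)^p]$ uniform in $R$ and $n$, and then let $R \to \infty$ and (for $A = \N$) $n \to \infty$ by monotone convergence, using $U_n \uparrow U_\infty$ (for finite $A$ one simply takes $n = N$). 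This yields $\be_\mu[U_\infty^p] \leqslant 2 \cdot 4^p(\be_\mu[M_\infty^p] + t_0^p) < \infty$, completing the proof.
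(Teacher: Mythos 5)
Your proof is correct and takes essentially the same route as the paper: your forward direction is precisely a derivation of the claim the paper states and omits, namely $\be_\mu[U_n^p] \leqslant 2^{1+2p}\left(\be_\mu[M_n^p] + U_n^*(2^{-1-2p})^p\right)$ (your threshold $t_0$ is the quantile at level $\tfrac12 4^{-p} = 2^{-1-2p}$ and your constant $2\cdot 4^p = 2^{1+2p}$), followed by the same uniform-in-$n$ bound and monotone convergence for $A=\N$. The only difference is that you supply full details for the two steps the paper dispatches by reference --- the ``easy'' backward implication (your pointwise bound $M_\infty \leqslant 4U_\infty + 2d_\G(z_0,z_1)$, correctly handling the semigroup subtlety that $Y_j$ is not the increment $d_\G(z_0S_{j-1}, z_0S_j)$) and the Hoffmann-J{\o}rgensen $L^p$ estimate via Theorem~\ref{Thj} with $k=1$, $n_1=2$.
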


This result extends \cite[Theorem 3.1]{HJ} by Hoffmann-J{\o}rgensen to
the ``minimal'' framework of metric semigroups. The proofs of
Theorem~\ref{Thj2} and the next result use the notion of the quantile
functions, or decreasing rearrangements, of $\G$-valued random variables:

\begin{definition}\label{Ddecrear}
Suppose $(\G, d_\G)$ is a metric semigroup, and $X : (\Omega,
\mathscr{A}, \mu) \to (\G,\mathscr{B}_\G)$. We define the
\textit{decreasing} (or \textit{non-increasing}) \textit{rearrangement}
of $X$ to be the right-continuous inverse $X^*$ of the function $t
\mapsto \bp{d_\G(z_0, z_0 X) > t}$, for any $z_0 \in \G$. In other words,
$X^*$ is the real-valued random variable defined on $[0,1]$ with the
Lebesgue measure, as follows:
\[
X^*(t) := \sup \{ y \in [0,\infty) : \bp{d_\G(z_0, z_0 X) > y} > t \}.
\]
\end{definition}

Note that $X^*$ has exactly the same law as $d_\G(z_0, z_0 X)$. Moreover,
if $(\G, \| \cdot \|)$ is a normed linear space, then $d_\G(z_0, z_0 X)$
can be replaced by $\|X\|$, and often papers in the literature refer to
$X^*$ as the decreasing rearrangement of $\|X\|$ instead of $X$ itself.
The convention that we adopt above is slightly weaker.

The \textbf{second} approach provides another estimate on the size of
$S_n$ through its moments, by comparing $\| S_n \|_q$ to $\| S_n \|_p$ --
or more precisely, $\be_\mu[U_n^q]^{1/q}$ to $\be_\mu[U_n^p]^{1/p}$ --
for $0 < p \leqslant q$. Moreover, the constants of comparison are
universal, valid for all abelian semigroups and all finite sequences of
independent random variables, and depend only on a threshold:

\begin{utheorem}\label{Tupq}
Given $p_0 > 0$, there exist universal constants $c = c(p_0), c' =
c'(p_0) > 0$ depending only on $p_0$, such that for all choices of
(a) separable abelian metric semigroups $(\G, d_\G)$,
(b) finite sequences of independent $\G$-valued random variables $X_1,
\dots, X_n$,
(c) $q \geqslant p \geqslant p_0$, and
(d) $\epsilon \in (-q,\log(16)]$,
we have
\begin{align*}
\be_\mu[U_n^q]^{1/q} \leqslant &\ c \frac{q}{\max(p, \log(\epsilon+q))} (
\be_\mu[U_n^p]^{1/p} + M_n^*(e^{-q}/8)) + c \be_\mu[M_n^q]^{1/q}\\
\leqslant &\ c' \frac{q}{\max(p, \log(\epsilon+q))} (\be_\mu[U_n^p]^{1/p}
+ \be_\mu[M_n^q]^{1/q}) \quad \text{ if } \epsilon \geqslant \min(1,
e-p_0).
\end{align*}

\noindent Moreover, we may choose
\[
c'(p_0) = c(p_0) \cdot \left( 8^{1/p_0}e + \max(1,
\frac{\log(\epsilon+p_0)}{p_0}) \right).
\]
\end{utheorem}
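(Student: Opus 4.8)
The plan is to reduce the claim to a single estimate on the decreasing rearrangement $U_n^*$ of $U_n$ (Definition~\ref{Ddecrear}) and then to feed the generalized Hoffmann--J\o rgensen inequality (Theorem~\ref{Thj}) into the integral $\be_\mu[U_n^q] = \int_0^1 U_n^*(u)^q\,du$. I would fix the deep threshold $s := M_n^*(e^{-q}/8)$, so that $\bp{M_n > s} \leqslant e^{-q}/8$, and a moderate threshold $t_1 := U_n^*(\theta)$ at a level $\theta$ to be chosen, so that $\bp{U_n > t_1} \leqslant \theta$ and, by Chebyshev applied to the quantile, $t_1 \leqslant \theta^{-1/p}\be_\mu[U_n^p]^{1/p}$. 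Feeding these into Theorem~\ref{Thj} with $k = 1$ and a free power $n_1$ (one checks that $\theta$ moderate puts us in the branch $1 \notin I_0$), and using $(2n_1 - 1)t_1 + (n_1 - 1)s \leqslant 2n_1(t_1 + s)$, yields the factorial tail bound
\[
\bp{U_n > 2n_1(t_1 + s)} \;\leqslant\; \bp{M_n > s} + \frac{1}{n_1!}\Big(\frac{\theta}{1-\theta}\Big)^{n_1} \;\leqslant\; \frac{e^{-q}}{8} + \frac{1}{n_1!}\Big(\frac{\theta}{1-\theta}\Big)^{n_1}.
\]
It is the factorial $1/n_1!$ furnished by Theorem~\ref{Thj} -- absent from the classical inequality -- that drives the sharp constant below.

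Inverting this into a quantile statement is the crux. For $u \geqslant e^{-q}/4$ the term $\bp{M_n > s} \leqslant e^{-q}/8$ is at most $u/2$, so it suffices to make $\frac{1}{n_1!}(\theta/(1-\theta))^{n_1} \leqslant u/2$; by Stirling this holds once
\[
n_1 \;\gtrsim\; \frac{\log(1/u)}{\log\log(1/u) + \log\frac{1-\theta}{\theta}},
\]
whence $U_n^*(u) \leqslant 2 n_1 (t_1 + s)$ for this $n_1$. Since $n_1(u)$ increases as $u$ decreases, over the whole moderate range it is maximized at the cutoff $u_*$, where $\log(1/u_*) \asymp \epsilon + q$ -- here the free parameter $\epsilon \in (-q,\log 16]$ simply tunes the cutoff within the window compatible with the scale $e^{-q}/8$ of $s$. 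Thus $\log\log(1/u_*) \asymp \log(\epsilon + q)$, and choosing $\theta$ a fixed moderate constant when $p$ is small, or $\theta = e^{-p}$ (so that $t_1 \leqslant e\,\be_\mu[U_n^p]^{1/p}$ and $\log\frac{1-\theta}{\theta}\asymp p$) when $p$ is large, produces
\[
n_1 \;\asymp\; \frac{q}{\max(p,\ \log(\epsilon+q))},
\]
which is exactly the factor appearing in the theorem; in both cases $t_1 \lesssim \be_\mu[U_n^p]^{1/p}$ with a $p_0$-bounded constant.

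With $U_n^*(u) \leqslant 2n_1(u)(t_1 + s) \lesssim \frac{q}{\max(p,\log(\epsilon+q))}(t_1 + s)$ uniformly over the moderate range $u \in [u_*, 1]$, integrating $\int_{u_*}^1 U_n^*(u)^q\,du$ and taking $q$-th roots gives the first two terms $c\frac{q}{\max(p,\log(\epsilon+q))}(\be_\mu[U_n^p]^{1/p} + M_n^*(e^{-q}/8))$. On the deep range $u \in [0, u_*]$ -- of total length $u_* \asymp e^{-q}$, and also the range into which one is forced when the constraint $\sum_i n_i \leqslant n+1$ caps $n_1$ at $n+1$ -- the quantiles of $U_n$ are no longer governed by the $L^p$-scale $t_1$ but by the single largest increment. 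Here I would use that $\G$ is \emph{abelian} to permute the factors of $S_n = X_1\cdots X_n$ and isolate the maximal term, comparing the deep quantiles of $U_n$ directly with those of $M_n$; integrating this comparison bounds $\int_0^{u_*}U_n^*(u)^q\,du$ by a universal multiple of $\be_\mu[M_n^q]$, supplying the free-standing term $c\,\be_\mu[M_n^q]^{1/q}$. Summing the moderate and deep contributions yields the first displayed inequality.

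For the second inequality I would eliminate the explicit quantile $s = M_n^*(e^{-q}/8)$: since $\bp{M_n > y} > e^{-q}/8$ for every $y < s$, Chebyshev gives $s \leqslant (8 e^q \be_\mu[M_n^q])^{1/q} = 8^{1/q} e\,\be_\mu[M_n^q]^{1/q} \leqslant 8^{1/p_0} e\,\be_\mu[M_n^q]^{1/q}$ using $q \geqslant p_0$. Substituting this and absorbing the free term $c\,\be_\mu[M_n^q]^{1/q}$ into the leading factor -- which requires $\max(p,\log(\epsilon+q))/q \leqslant \max(1,\ \log(\epsilon+p_0)/p_0)$, precisely where the hypothesis $\epsilon \geqslant \min(1, e - p_0)$ is used to make $q \mapsto \log(\epsilon+q)/q$ nonincreasing on $[p_0,\infty)$ -- produces exactly $c'(p_0) = c(p_0)\big(8^{1/p_0} e + \max(1,\log(\epsilon+p_0)/p_0)\big)$. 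I expect the main obstacle to be the deep-range step: extracting a clean $\be_\mu[M_n^q]$ bound, with a universal constant and \emph{no} spurious $q/\log q$ factor, for the deepest quantiles of $U_n$, where the sum degenerates to its largest increment and commutativity must substitute for the missing inverses and norm. The Stirling inversion yielding the precise denominator $\max(p,\log(\epsilon+q))$ is the other delicate point, though more routine once the quantile bound is in place.
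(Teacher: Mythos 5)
Your route is essentially the paper's, reassembled: the factorial Hoffmann-J{\o}rgensen bound inverted via Stirling is precisely the paper's Lemma~\ref{Lbounds} (which packages the conclusion as $U_n^*(t) \leqslant c_1\frac{\log(1/t)}{\max\{\log(1/s),\log\log(4/t)\}}(U_n^*(s)+M_n^*(t/2))$), and your split of $\be_\mu[U_n^q]=\int_0^1 U_n^*(u)^q\,du$ into a moderate range governed by a single quantile and a deep range governed by the largest increment is exactly the content of Theorem~\ref{Tbounds}, $\be_\mu[U_n^q]^{1/q}\approx U_n^*(e^{-q}/4)+\be_\mu[\ell_X^q]^{1/q}$, combined with $\be_\mu[\ell_X^q]\leqslant 2\be_\mu[M_n^q]$. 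The endgame --- $M_n^*(e^{-q}/8)\leqslant 8^{1/p_0}e\,\be_\mu[M_n^q]^{1/q}$, monotonicity of $x/\log(\epsilon+x)$ under $\epsilon\geqslant\min(1,e-p_0)$ to absorb the standalone term --- is identical, and your formula for $c'(p_0)$ matches.

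Two caveats. First, the step you yourself flag as the main obstacle, namely $\int_0^{u_*}U_n^*(u)^q\,du\lesssim C^q\be_\mu[M_n^q]$ on the deep range, is genuinely the hardest ingredient and is not supplied: in the paper it is Proposition~\ref{Pbounds2} together with the second comparison in Theorem~\ref{Tbounds} (the truncation $U'_n$, the decomposition of $U''_n(r)$ into disjoint pieces $V_k$ with $V_k^*(t)\leqslant k\,\ell(t(k-1)!/r^{k-1})$, and the resulting bound $\be_\mu[U''_n(r)^p]^{1/p}\leqslant 2e^{2^pr/p}\be_\mu[\ell_X^p]^{1/p}$), adapted from \cite{HM}; ``permute and isolate the maximal term'' is the right slogan but does not by itself yield the universal constant. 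Second, $\log(1/u_*)\asymp\epsilon+q$ cannot be taken literally: $\epsilon+q$ may be arbitrarily close to $0$ while the cutoff must stay at scale $e^{-q}$, since otherwise the deep range swallows everything and the (false) bound $\be_\mu[U_n^q]^{1/q}\lesssim\be_\mu[M_n^q]^{1/q}$ would be needed. The parameter $\epsilon$ does not tune the cutoff; it enters only through the one-sided estimate $\log(\epsilon+q)\leqslant\log\log(16e^q)$ (valid because $\epsilon\leqslant\log 16$), which relaxes the denominator in the direction you need. Relatedly, the cap $n_1\leqslant n+1$ does not push you into the deep range: for $n_1\geqslant n+2$ one has $U_n\leqslant nM_n$ by the triangle inequality, so the event $\{U_n>(2n_1-1)t_1+(n_1-1)s\}$ is already contained in $\{M_n>s\}$ and the tail bound holds trivially. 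With these corrections and the deep-range lemma supplied, your argument closes and coincides with the paper's.
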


Theorem~\ref{Tupq} extends a host of results in the Banach space
literature, including by Johnson--Schechtman--Zinn~\cite{JSZ},
Hitczenko~\cite{Hi}, and Hitczenko and Montgomery-Smith~\cite{HM}. (See
also \cite[Theorem 6.20]{LT} and \cite[Proposition 1.4.2]{KW}.)
Theorem~\ref{Tupq} also yields the correct order of the constants as $q
\to \infty$, as discussed by Johnson \textit{et al} in
\textit{loc.~cit.}~where they extend previous work on Khinchin's
inequality by Rosenthal \cite{Ro}. Moreover, all of these results are
shown for Banach spaces. Theorem~\ref{Tupq} holds additionally for all
compact Lie groups, finite abelian groups and lattices, and spaces of
labelled and unlabelled graphs.

\section{L\'evy's equivalence in metric semigroups}\label{Slevy}

In this section we prove:

\begin{theorem}[L\'evy's Equivalence]\label{Tlevy}
Suppose $(\G, d_\G)$ is a complete separable metric semigroup, $X_n :
(\Omega, \mathscr{A}, \mu) \to (\G, \mathscr{B}_\G)$ are independent, $X
\in L^0(\Omega, \G)$, and $S_n$ is defined as in \eqref{Esemigroup}. Then
\[
S_n \longrightarrow X\ a.s.~\mathbb{P}_\mu \quad \Longleftrightarrow
\quad S_n \conv{P} X.
\]

\noindent Moreover, if the sequence $S_n$ does not converge as above,
then it diverges almost surely.
\end{theorem}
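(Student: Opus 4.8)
The plan is to dispatch the easy implication directly and then reduce both the converse and the ``moreover'' to the Ottaviani--Skorohod inequality established earlier in this section, the decisive extra ingredient being translation-invariance. The implication $S_n \longrightarrow X$ a.s.~$\Longrightarrow S_n \conv{P} X$ is immediate, since $d_\G(S_n, X) \to 0$ almost surely forces $d_\G(S_n, X) \to 0$ in probability.

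For the converse, suppose $S_n \conv{P} X$; then $(S_n)$ is Cauchy in probability, i.e.~$\bp{d_\G(S_m, S_n) > \epsilon} \to 0$ as $m, n \to \infty$. I aim to upgrade this to almost sure Cauchy-ness, after which completeness of $\G$ produces an a.s.~limit $S_\infty$ and uniqueness of limits in probability forces $S_\infty = X$ a.s. Writing $S_{m,k} := X_{m+1} \cdots X_k$ for $m < k$, so that $S_k = S_m S_{m,k}$, left-invariance of $d_\G$ yields the identity $d_\G(S_k, S_l) = d_\G(S_{m,k}, S_{m,l})$ for all $m < k \leqslant l$. Hence, conditioning on $X_1, \dots, X_m$ (which fixes the random reference point $w := S_m$), the quantities $d_\G(S_m, S_k) = d_\G(w, w S_{m,k})$ take the form $d_\G(z_1, z_0 S_{m,k})$ with $z_0 = z_1 = w$, while the increments $d_\G(S_k, S_l)$ are independent of $X_1, \dots, X_m$. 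Since the maximal inequality is valid for arbitrary reference points, integrating its conditional form over $X_1, \dots, X_m$ yields
\[
\bp{\max_{m < k \leqslant n} d_\G(S_m, S_k) > s + t} \leqslant \frac{\bp{d_\G(S_m, S_n) > t}}{\min_{m < k \leqslant n} \bp{d_\G(S_k, S_n) \leqslant s}}.
\]
Taking $s = t = \epsilon/2$ and then letting $n \to \infty$, Cauchy-ness in probability drives the numerator to $0$ and the denominator to $1$ as $m \to \infty$, so $V_m := \sup_{k > m} d_\G(S_m, S_k) \to 0$ in probability. The oscillation $W_m := \sup_{k,l \geqslant m} d_\G(S_k, S_l) \leqslant 2 V_m$ is non-increasing, so its a.s.~limit vanishes and $(S_n)$ is a.s.~Cauchy.

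For the ``moreover'', I would show that $C := \{(S_n) \text{ converges}\}$ is a tail event; this is exactly where the absence of inverses (one cannot undo a change in $X_1$) is circumvented by the same identity. For each fixed $m$, Cauchy-ness of $(S_n)$ depends only on the distances $d_\G(S_k, S_l)$ with $k, l > m$, and these equal $d_\G(S_{m,k}, S_{m,l})$, hence are $\sigma(X_{m+1}, X_{m+2}, \dots)$-measurable. Thus $C$ belongs to the tail $\sigma$-algebra $\bigcap_m \sigma(X_{m+1}, \dots)$, and Kolmogorov's $0$--$1$ law gives $\bp{C} \in \{0,1\}$. If $\bp{C} = 1$ then $S_n$ converges a.s.; and if the stated conditions fail then $(S_n)$ does not converge in probability (otherwise the converse above forces a.s.~convergence), so $\bp{C} \neq 1$, whence $\bp{C} = 0$ and $S_n$ diverges almost surely.

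I expect the principal obstacle to be the careful passage through the conditional Ottaviani--Skorohod inequality: one must check that the version proved earlier holds with arbitrary $\sigma(X_1, \dots, X_m)$-measurable reference points, and that the increment probabilities in the denominator are genuinely independent of $X_1, \dots, X_m$ so that they factor out of the conditional expectation (separability of $\G$ being invoked to ensure the relevant suprema are measurable). Once translation-invariance has decoupled the tail increments from the initial block $X_1, \dots, X_m$, the remaining Cauchy-and-completeness bookkeeping is routine.
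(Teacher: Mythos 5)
Your proposal is correct and follows essentially the same route as the paper: apply the Ottaviani--Skorohod (second Mogul'skii) inequality to the tail products $X_{m+1}\cdots X_k$ to upgrade Cauchy-in-probability to almost-sure Cauchyness, invoke completeness, and settle the ``moreover'' via Kolmogorov's $0$--$1$ law. The only cosmetic difference is that your detour through a conditional inequality with random reference point $w = S_m$ is unnecessary, since by translation-invariance $d_\G(w, wS_{m,k})$ does not depend on $w$ at all (Proposition~\ref{Cstrict}), so the paper simply applies the unconditional inequality to the shifted sequence in the associated monoid $\G'$.
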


Special cases of this result have been shown in the literature. For
instance, \cite[\S 9.7]{Du} considers $\G = \mathbb{R}^n$. The more
general case of a separable Banach space $\bb$ was shown by
It$\hat{\mbox{o}}$--Nisio \cite[Theorem 3.1]{IN}, as well as by
Hoffmann-J{\o}rgensen and Pisier \cite[Lemma 1.2]{HJP}.
The most general version in the literature to date is by Tortrat, who
proved the result for a complete separable metric group in \cite{To2}.
Thus Theorem~\ref{Tlevy} is the closest to assuming only the minimal
structure necessary to state the result (as well as to prove it).

In order to prove Theorem~\ref{Tlevy}, we first study basic properties of
metric semigroups. Note that for a metric group, the following is
standard; see \cite{Kl}, for instance.

\begin{lemma}\label{Linverse}
If $(\G, d_\G)$ is a metric (semi)group, then the translation-invariance
of $d_\G$ implies the ``triangle inequality'':
\begin{equation}\label{Etriangle}
d_\G(y_1 y_2, z_1 z_2) \leqslant d_\G(y_1, z_1) + d_\G(y_2, z_2)\ \forall
y_i, z_i \in \G,
\end{equation}

\noindent and in turn, this implies that each (semi)group operation is
continuous.

If instead $\G$ is a group equipped with a metric $d_\G$, then except for
the last two statements, any two of the following assertions imply the
other two:
\begin{enumerate}
\item $d_\G$ is left-translation invariant: $d_\G(ca, cb) = d_\G(a,b)$
for all $a,b,c \in \G$. In other words, left-multiplication by any $c \in
\G$ is an isometry.

\item $d_\G$ is right-translation invariant.

\item The inverse map $: \G \to \G$ is an isometry. Equivalently, the
triangle inequality~\eqref{Etriangle} holds.

\item $d_\G$ is invariant under all inner/conjugation automorphisms.
\end{enumerate}
\end{lemma}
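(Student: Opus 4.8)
\emph{The plan} is to derive everything from the ordinary metric triangle inequality together with the single observation that multiplying the two arguments of $d_\G$ by a common factor on one side leaves the distance unchanged. For the first assertion, \eqref{Etriangle}, I would insert the intermediate point $z_1 y_2$ and write
\[
d_\G(y_1 y_2, z_1 z_2) \leqslant d_\G(y_1 y_2, z_1 y_2) + d_\G(z_1 y_2, z_1 z_2),
\]
then collapse the first summand by right-invariance (cancel $y_2$) and the second by left-invariance (cancel $z_1$) to obtain $d_\G(y_1,z_1)+d_\G(y_2,z_2)$. Continuity is then immediate: equipping $\G\times\G$ with the sum metric, \eqref{Etriangle} says precisely that multiplication is $1$-Lipschitz.

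For the group statement I would first isolate the key equivalence $\eqref{Etriangle}\iff(1)\wedge(2)$. One direction is the computation just given. For the converse, setting $y_1=z_1=c$ in \eqref{Etriangle} gives $d_\G(ca,cb)\leqslant d_\G(a,b)$, and replacing $c$ by $c^{-1}$ and $(a,b)$ by $(ca,cb)$ gives the reverse inequality, so \eqref{Etriangle} forces (1); symmetrically it forces (2). Next I would show $(1)\wedge(2)$ implies (3) and (4): left-multiplying by $b$ and right-multiplying by $a$ turns $d_\G(a^{-1},b^{-1})$ into $d_\G(b,a)$, which is the isometry of inversion, while $d_\G(gag^{-1},gbg^{-1})=d_\G(ag^{-1},bg^{-1})=d_\G(a,b)$ gives conjugation-invariance.

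It then remains to treat the genuinely mixed pairs, and in each case the strategy is identical: produce both (1) and (2), after which the previous paragraph supplies the rest. For $(1)\wedge(3)\Rightarrow(2)$ I would chain
\[
d_\G(ac,bc)=d_\G\big((ac)^{-1},(bc)^{-1}\big)=d_\G(c^{-1}a^{-1},c^{-1}b^{-1})=d_\G(a^{-1},b^{-1})=d_\G(a,b),
\]
using inversion-isometry at the ends and left-invariance in the middle; the pair $(2)\wedge(3)$ is its mirror image. For $(1)\wedge(4)\Rightarrow(2)$ I would conjugate the pair $(ac,bc)$ by $c$, observe $c(ac)c^{-1}=ca$ and $c(bc)c^{-1}=cb$, and conclude $d_\G(ac,bc)=d_\G(ca,cb)=d_\G(a,b)$; the pair $(2)\wedge(4)$ is again symmetric. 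Any pair containing the triangle-inequality form of (3) is handled at once, since $\eqref{Etriangle}$ already yields $(1)\wedge(2)$.

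\emph{The main obstacle} is the two-headed assertion (3). Here $\eqref{Etriangle}\Rightarrow$ inversion-isometry holds unconditionally (via $(1)\wedge(2)$), but the converse requires one-sided invariance: given (1) one has $(3)\Rightarrow(2)\Rightarrow\eqref{Etriangle}$, so the two forms of (3) do coincide in the presence of (1) or (2), yet not in isolation. This is exactly why the pair formed by the last two assertions must be set aside: in any abelian group conjugation-invariance is vacuous, so that pair says no more than inversion-isometry alone, and on $(\mathbb{Z},+)$ the metric $d(a,b)=|a^3-b^3|$ is inversion- and conjugation-invariant while satisfying neither (1) nor \eqref{Etriangle}. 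Every other pair, as above, routes through $(1)\wedge(2)$ and then closes.
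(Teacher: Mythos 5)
Your proposal is correct and complete. Note that the paper itself supplies no proof of this lemma: it is stated as standard for metric groups with a pointer to \cite{Kl}, so there is no in-text argument to compare against. Your derivation is the expected one --- interpolating through $z_1y_2$ to get \eqref{Etriangle} from two-sided invariance, reading off $1$-Lipschitz continuity of multiplication, and then routing every admissible pair of assertions through the conjunction $(1)\wedge(2)$ before harvesting (3) and (4). The one place where care is genuinely needed is the double-barrelled formulation of (3), and you handle it properly: inversion-isometry and \eqref{Etriangle} are equivalent only in the presence of (1) or (2), and your example $d(a,b)=|a^3-b^3|$ on $(\mathbb{Z},+)$ --- where inversion-isometry and (vacuous) conjugation-invariance hold while (1), (2), and \eqref{Etriangle} all fail --- is exactly the witness that explains why the pair consisting of the last two assertions must be excluded, a point the paper asserts without justification. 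Each individual implication you write out ($(1)\wedge(3)\Rightarrow(2)$ via double inversion, $(1)\wedge(4)\Rightarrow(2)$ via conjugating $(ac,bc)$ by $c$, and their mirror images) checks out.
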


In order to show Theorem~\ref{Tlevy} for metric semigroups, we recall the
following preliminary result from \cite{KR4}, and will use it below
without further reference.

\begin{prop}[\cite{KR4}]\label{Cstrict}
Suppose $(\G, d_\G)$ is a metric semigroup, and $a,b \in \G$. Then
\begin{equation}\label{Estrict}
d_\G(a,ba) = d_\G(b,b^2) = d_\G(a,ab)
\end{equation}

\noindent is independent of $a \in \G$. Moreover, a set $\G$ is a metric
semigroup only if $\G$ is a metric monoid, or the set of non-identity
elements in a metric monoid $\G'$. This is if and only if the number of
idempotents in $\G$ is one or zero, respectively.
Furthermore, the metric monoid $\G'$ is (up to a monoid isomorphism) the
unique smallest element in the class of metric monoids containing $\G$ as
a sub-semigroup.
\end{prop}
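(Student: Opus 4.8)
The plan is to prove the four assertions in turn, using the equidistance \eqref{Estrict} as the workhorse for everything that follows. First I would establish \eqref{Estrict} by a direct computation from the two-sided translation-invariance of $d_\G$: left-translating $d_\G(a, ba)$ by $b$ gives $d_\G(ba, b^2 a)$, while right-translating $d_\G(b, b^2)$ by $a$ gives the same quantity $d_\G(ba, b^2 a)$, so $d_\G(a, ba) = d_\G(b, b^2)$, and the independence of $a$ is immediate. The identity $d_\G(a, ab) = d_\G(b, b^2)$ follows symmetrically, by right-translating $d_\G(a, ab)$ by $b$ and left-translating $d_\G(b, b^2)$ by $a$, both yielding $d_\G(ab, ab^2)$.

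Next I would show that every idempotent is a two-sided identity, so that $\G$ has at most one idempotent. Indeed, if $e^2 = e$ then \eqref{Estrict} with $b = e$ gives $d_\G(a, ea) = d_\G(a, ae) = d_\G(e, e^2) = 0$ for every $a$, whence $ea = ae = a$; since a semigroup has at most one two-sided identity, idempotents are unique whenever they exist. This already yields the dichotomy: $\G$ has exactly one idempotent iff $\G$ is a metric monoid (that idempotent being the identity), and otherwise $\G$ has none.

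The heart of the argument is the construction, in the idempotent-free case, of the enveloping monoid $\G'$. I would set $\G' := \G \sqcup \{e\}$ with $e$ a formal identity, multiplication extended by $ex = xe = x$, and define $d_{\G'}$ to agree with $d_\G$ on $\G \times \G$, with $d_{\G'}(e, e) = 0$ and $d_{\G'}(e, x) := d_\G(x, x^2)$ for $x \in \G$. This is precisely the value forced by \eqref{Estrict}, since translation-invariance demands $d_{\G'}(e, x) = d_{\G'}(y, yx) = d_\G(x, x^2)$; thus well-definedness and translation-invariance of $d_{\G'}$ drop out of \eqref{Estrict}, and symmetry holds by construction. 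Positivity is where the hypothesis bites: $d_{\G'}(e, x) = d_\G(x, x^2) > 0$ precisely because no $x \in \G$ is idempotent, so $d_{\G'}$ separates points. For the triangle inequality only the cases through $e$ need checking, and these reduce to \eqref{Estrict} together with the ordinary triangle inequality in $\G$: for instance $d_\G(x, y) \leqslant d_\G(x, xy) + d_\G(xy, y) = d_\G(y, y^2) + d_\G(x, x^2)$, using \eqref{Estrict} and the symmetry $d_\G(xy, y) = d_\G(y, xy)$, while the remaining case $d_\G(y, y^2) \leqslant d_\G(x, x^2) + d_\G(x, y)$ is obtained the same way. I expect this verification to be the main obstacle, since it is the only place where the absence of idempotents is genuinely used, and where \eqref{Etriangle} from Lemma~\ref{Linverse} cannot be invoked directly — the full triangle inequality for $d_{\G'}$ must be established by hand before that lemma applies. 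Closure of $\G$ under multiplication, hence that it is exactly the set of non-identity elements of $\G'$, holds by construction.

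Finally, for minimality and uniqueness I would argue that in any metric monoid $\G''$ containing $\G$ as a sub-semigroup, the identity $e''$ lies outside $\G$ (as $\G$ has no idempotent) and satisfies $d_{\G''}(e'', x) = d_{\G''}(x, x^2) = d_\G(x, x^2)$ by \eqref{Estrict} applied in $\G''$ with $a = e''$. Hence $\G \sqcup \{e''\}$ is a metric submonoid of $\G''$ isometric and isomorphic to $\G'$, so $\G'$ embeds into every such $\G''$ and is determined up to monoid isomorphism, which is the asserted smallest-element property.
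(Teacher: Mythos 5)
Your argument is correct. Note that the paper itself does not prove Proposition~\ref{Cstrict}: it is quoted from \cite{KR4} without proof, so there is no in-paper argument to compare against. Your route is the natural one, and you correctly isolate the points where something must actually be verified: the equidistance \eqref{Estrict} via two-sided translation invariance (translating both quantities to $d_\G(ba,b^2a)$, resp.\ $d_\G(ab,ab^2)$); the fact that any idempotent is a two-sided identity (so $\G$ has at most one), which settles the dichotomy; positivity of $d_{\G'}(e,x):=d_\G(x,x^2)$ precisely from the absence of idempotents; and the triangle inequality through the adjoined identity, which you rightly flag as the step that cannot be delegated to \eqref{Etriangle} and which you reduce to \eqref{Estrict} together with the triangle inequality of $d_\G$ via the intermediate point $xy$ (and, for the case $d_\G(y,y^2)\leqslant d_\G(x,x^2)+d_\G(x,y)$, via $x^2$ and left-invariance). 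The minimality argument is also sound: \eqref{Estrict} applied inside any ambient metric monoid $\G''$ forces $d_{\G''}(e'',x)=d_\G(x,x^2)$, so $\G\cup\{e''\}$ is an isometric isomorphic copy of $\G'$, giving both the embedding and the uniqueness up to monoid isomorphism.
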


\begin{remark}\label{Rstrict}
In the sequel, we denote -- when required -- the unique metric monoid
containing a given metric semigroup $\G$ by $\G' := \G \cup \{ 1' \}$.
Note that the idempotent $1'$ may already be in $\G$, in which case $\G =
\G'$. One consequence of Proposition~\ref{Cstrict} is that instead of
working with metric semigroups, one can use the associated monoid $\G'$
instead. (In other words, the (non)existence of the identity is not an
issue in many such cases.)
This helps simplify other calculations. For instance, what would be a
lengthy, inductive (yet straightforward) computation now becomes much
simpler: for nonnegative integers $k,l$, and $z_0, z_1, \dots, z_{k+l}
\in \G$, the triangle inequality~\eqref{Etriangle} implies:
\[
d_\G(z_0 \cdots z_k, z_0 \cdots z_{k+l}) = d_{\G'}(1', \prod_{i=1}^l
z_{k+i}) \leqslant \sum_{i=1}^l d_{\G'}(1', z_{k+i}) = \sum_{i=1}^l
d_\G(z_0, z_0 z_{k+i}).
\]
\end{remark}

\subsection{The Mogul'skii inequalities and proof of L\'evy's
equivalence}

Like L\'evy's Equivalence (Theorem~\ref{Tlevy}) and the
Hoffmann-J{\o}rgensen inequality (Theorem~\ref{Thj}), many other maximal
and minimal inequalities can be formulated using only the notions of a
distance function and of a semigroup operation. We now extend to metric
semigroups two inequalities by Mogul'skii, which were used in \cite{Mog}
to prove a law of the iterated logarithm in normed linear spaces. The
following result will be useful in proving Theorem~\ref{Tlevy}.

\begin{prop}[Mogul'skii--Ottaviani--Skorohod inequalities]\label{Pmog}
Suppose $(\G, d_\G)$ is a separable metric semigroup, $z_0, z_1 \in \G$,
$a,b \in [0,\infty)$, and $X_1, \dots, X_n \in L^0(\Omega,\G)$ are
independent. Then for all integers $1 \leqslant m \leqslant n$,
\begin{align*}
& \bp{\min_{m \leqslant k \leqslant n} d_\G(z_1, z_0 S_k) \leqslant a}
\cdot \min_{m \leqslant k \leqslant n} \bp{d_\G(S_k, S_n) \leqslant b}
\leqslant \bp{d_\G(z_1, z_0 S_n) \leqslant a + b},\\
& \bp{\max_{m \leqslant k \leqslant n} d_\G(z_1, z_0 S_k) \geqslant a}
\cdot \min_{m \leqslant k \leqslant n} \bp{d_\G(S_k, S_n) \leqslant b}
\leqslant \bp{d_\G(z_1, z_0 S_n) \geqslant a - b}.
\end{align*}
\end{prop}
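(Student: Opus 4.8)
The plan is to run the classical Ottaviani--Skorohod first-passage argument, adapted to the semigroup setting by replacing the usual manipulation of increments $S_n - S_k$ (unavailable without inverses or subtraction) with the identity for $d_\G(S_k, S_n)$ supplied by Proposition~\ref{Cstrict} and Remark~\ref{Rstrict}.

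First I would set up the two first-passage (stopping) times. For the first inequality, let $\tau$ be the least index $k \in \{m, \dots, n\}$ with $d_\G(z_1, z_0 S_k) \leqslant a$ (with $\tau := \infty$ if no such $k$ exists); for the second, let $\sigma$ be the least $k$ with $d_\G(z_1, z_0 S_k) \geqslant a$. Then $\{ \min_{m \leqslant k \leqslant n} d_\G(z_1, z_0 S_k) \leqslant a \} = \bigsqcup_{k=m}^n \{ \tau = k \}$, and likewise $\{ \max_{m \leqslant k \leqslant n} d_\G(z_1, z_0 S_k) \geqslant a \} = \bigsqcup_{k=m}^n \{ \sigma = k \}$, as disjoint unions indexed by the first-passage value. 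Separability guarantees that all the maps $d_\G(z_1, z_0 S_k)$ and $d_\G(S_k, S_n)$ are measurable (using the joint continuity of the operation from Lemma~\ref{Linverse}), so these events and stopping times are well defined.

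The core geometric step uses the triangle inequality from Lemma~\ref{Linverse} together with left translation-invariance $d_\G(z_0 S_k, z_0 S_n) = d_\G(S_k, S_n)$. On $\{ \tau = k \} \cap \{ d_\G(S_k, S_n) \leqslant b \}$ one gets $d_\G(z_1, z_0 S_n) \leqslant d_\G(z_1, z_0 S_k) + d_\G(z_0 S_k, z_0 S_n) \leqslant a + b$; dually, on $\{ \sigma = k \} \cap \{ d_\G(S_k, S_n) \leqslant b \}$ the reverse triangle inequality yields $d_\G(z_1, z_0 S_n) \geqslant d_\G(z_1, z_0 S_k) - d_\G(S_k, S_n) \geqslant a - b$ (trivially true when $a < b$, since the right-hand probability is then $1$). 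Thus each slice is contained in the target event appearing on the right-hand side of the respective inequality.

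The crucial---and only semigroup-specific---step, which I expect to be the main obstacle, is the independence needed to factor the probabilities. The event $\{ \tau = k \}$ (resp.~$\{ \sigma = k \}$) is $\sigma(X_1, \dots, X_k)$-measurable, whereas I need $\{ d_\G(S_k, S_n) \leqslant b \}$ to be $\sigma(X_{k+1}, \dots, X_n)$-measurable. This is precisely where subtraction fails in a general (non-abelian, inverse-free) semigroup, and where Remark~\ref{Rstrict} (a consequence of Proposition~\ref{Cstrict}) rescues the argument: since $S_n = S_k X_{k+1} \cdots X_n$, one has $d_\G(S_k, S_n) = d_{\G'}(1', X_{k+1} \cdots X_n)$ in the associated metric monoid $\G'$, a function of $X_{k+1}, \dots, X_n$ alone. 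Independence of the $X_i$ then gives $\bp{\tau = k,\ d_\G(S_k, S_n) \leqslant b} = \bp{\tau = k} \cdot \bp{d_\G(S_k, S_n) \leqslant b}$; bounding the second factor below by $\min_{m \leqslant j \leqslant n} \bp{d_\G(S_j, S_n) \leqslant b}$, summing the disjoint slices over $k$, and recognizing $\sum_{k=m}^n \bp{\tau = k} = \bp{\min_{m \leqslant k \leqslant n} d_\G(z_1, z_0 S_k) \leqslant a}$ (resp.~the analogous identity for $\sigma$) produces both inequalities at once. I would write out the first inequality in full and note that the second follows identically, \emph{mutatis mutandis}, via the reverse triangle inequality.
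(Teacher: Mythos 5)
Your proof is correct, and it is precisely the ``standard argument'' that the paper alludes to when it omits the proof of Proposition~\ref{Pmog} for brevity: the classical first-passage decomposition over the stopping time, the (reverse) triangle inequality from Lemma~\ref{Linverse}, and the factorization via independence. You also correctly identify and resolve the one genuinely semigroup-specific point --- that $d_\G(S_k,S_n)$ is a function of $X_{k+1},\dots,X_n$ alone, via Proposition~\ref{Cstrict} and Remark~\ref{Rstrict} rather than via subtraction of partial sums --- so nothing is missing.
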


These inequalities strengthen \cite[Lemma 1]{Mog} from normed linear
spaces to arbitrary metric semigroups. Also note that the second
inequality generalizes the \textit{Ottaviani--Skorohod inequality} to all
metric semigroups. Indeed, sources such as \cite[\S 9.7.2]{Du} prove this
result in the special case $\G = (\mathbb{R}^n, +), z_0 = z_1 = 0, m=1, a
= \alpha + \beta, b = \beta$, with $\alpha, \beta > 0$.

We omit the proof of Proposition \ref{Pmog} for brevity as it involves
standard arguments. Using this result, one can now prove
Theorem~\ref{Tlevy}. The idea is to use the approach in \cite{Du};
however, it needs to be suitably modified in order to work in the current
level of generality.

\begin{proof}[Proof of Theorem~\ref{Tlevy}]
The forward implication is easily verified in the more general setting of
a separable metric space; see e.g.~\cite[Section 9.2]{Du}. Conversely, we
claim that $S_i$ is Cauchy almost everywhere, if it converges in
probability to $X$. Given $\epsilon,\eta > 0$, the assumption and
definitions imply that there exists $n_0 \in \N$ such that
\[
\bp{d_\G(S_m,X) \geqslant \epsilon/8} < \frac{\eta}{2(1 + \eta)}, \quad
\forall m \geqslant n_0.
\]

\noindent This implies that
$\displaystyle \bp{d_\G(S_m,S_n) \geqslant \epsilon / 4} < \frac{\eta}{1
+ \eta}$ for all $n \geqslant m \geqslant n_0$.
Now define $S'_i := \prod_{j=1}^i X_{n_0+j}$. Fix $n > n_0$ and apply
Proposition~\ref{Pmog} to $\{ X_{n_0+i} : 1 \leqslant i \leqslant n - n_0
\}$ with $m=1, a = \epsilon/2, b = \epsilon/4$, and $z_0 = z_1$:
\begin{align*}
& \bp{\max_{n_0 + 1 \leqslant m \leqslant n} d_\G(S_{n_0}, S_m) \geqslant
\epsilon/2} = \bp{\max_{1 \leqslant i \leqslant n - n_0} d_{\G'}(z_0, z_0
S'_i) \geqslant \epsilon/2}\\
\leqslant &\ \frac{\bp{d_{\G'}(z_0, z_0 S'_{n-n_0}) \geqslant
\epsilon/4}}{1 - \max_{1 \leqslant i \leqslant n-n_0}
\bp{d_{\G'}(S'_i,S'_{n-n_0}) \geqslant \epsilon/4}} < \frac{\eta/ (1 +
\eta)}{1 - \eta/(1+\eta)} = \eta.
\end{align*}

Now define $Q_{n_0} := \sup_{n > n_0} d_\G(S_{n_0}, S_n)$ and
$\delta_{n_0} := \sup_{n > m > n_0} d_\G(S_m,S_n)$. Then $\delta_{n_0}
\leqslant 2 Q_{n_0}$; moreover, taking the limit of the above inequality
as $n \to \infty$ yields:
\[
\bp{Q_{n_0} \geqslant \epsilon / 2} \leqslant \eta \quad \implies \quad
\bp{\delta_{n_0} \geqslant \epsilon} \leqslant \eta.
\]

\noindent But then $\bp{\sup_{n > m} d_\G(S_m,S_n) \geqslant \epsilon}
\leqslant \eta$ for all $m > n_0$. Thus, $S_n$ is Cauchy almost
everywhere. Since $\G$ is complete, the result now follows from
\cite[Lemma 9.2.4]{Du}; that the almost sure limit is $X$ is because $S_n
\conv{P} X$. Finally, since the $X_n$ are independent, the
convergence of the sequence $S_n$ is a tail event. In particular, it has
probability zero or one by the Kolmogorov 0-1 law, concluding the proof.
\end{proof}

We remark for completeness that the other L\'evy equivalence has been
addressed in \cite{Csi,Ga,To2} for various classes of topological groups.
See also \cite{MS} for a variant in discrete completely simple
semigroups, \cite{Du,IN} for Banach space versions, and \cite{KR4} for a
version over any normed abelian metric (semi)group.

\section{Measuring the magnitude of sums of independent random
variables}\label{Stails}

We now prove Theorems~\ref{Thj2} and~\ref{Tupq} using the
Hoffmann-J{\o}rgensen inequality in Theorem~\ref{Thj}.
Recall that the Banach space version of this inequality is extremely
important in the literature and is widely used in bounding sums of
independent Banach space-valued random variables. Having proved
Theorem~\ref{Thj}, an immediate application of our main result is in
obtaining the first such bounds for general metric semigroups $\G$. We
also provide uniformly good $L^p$-bounds and tail probability bounds on
sums $S_n$ of independent $\G$-valued random variables.

\subsection{An upper bound by Hoffmann-J{\o}rgensen}

In this subsection we prove Theorem~\ref{Thj2}. The proof uses basic
properties of decreasing rearrangements (see Definition~\ref{Ddecrear}),
which we record here and use below, possibly without reference.

\begin{prop}\label{Pdecrear}
Suppose $X, Y : (\Omega, \mathscr{A}, \mu) \to [0,\infty)$ are random
variables, and
\[ x,\alpha,\beta,\gamma > 0, \quad t \in [0,1]. \]
\begin{enumerate}
\item 
$X^*(t) \leqslant x$ if and only if $\bp{X > x} \leqslant t$.

\item $X^*(t)$ is decreasing in $t \in [0,1]$ and increasing in $X
\geqslant 0$.

\item $(X/x)^*(t) = X^*(t)/x$.

\item Suppose $\bp{X > x} \leqslant \beta \bp{Y > \gamma x}$ for all
$x>0$. Then for all $p \in (0,\infty)$ and $t \in (0,1)$,
\[
\be_\mu[Y^p] \geqslant \beta^{-1} \gamma^p \be_\mu[X^p], \qquad
\be_\mu[X^p] \geqslant t X^*(t)^p.
\]

\item Fix finitely many tuples of positive constants $(\alpha_i, \beta_i,
\gamma_i, \delta_i)_{i=1}^N$, and real-valued nondecreasing functions
$f_i$ such that for all $x>0$ there exists at least one $i$ such that
\begin{equation}\label{Edecrear0}
f_i(\bp{X > \alpha_i x}) \leqslant \beta_i \bp{Y > \gamma_i
x}^{\delta_i}.
\end{equation}

\noindent Then
\begin{equation}\label{Edecrear}
X^*(t) \leqslant \max_{1 \leqslant i \leqslant N}
\frac{\alpha_i}{\gamma_i} Y^*((f_i(t)/\beta_i)^{1/\delta_i}).
\end{equation}

\noindent If on the other hand~\eqref{Edecrear0} holds for all $i$, then
$\displaystyle X^*(t) \leqslant \min_{1 \leqslant i \leqslant N}
\frac{\alpha_i}{\gamma_i} Y^*((f_i(t)/\beta_i)^{1/\delta_i})$.
\end{enumerate}
\end{prop}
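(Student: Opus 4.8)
The plan is to reduce every assertion to elementary properties of the \emph{survival function} $g_X(y) := \bp{X > y}$, which is nonincreasing and right-continuous in $y$, together with part~(1), which serves as the basic dictionary between $X^*$ and $g_X$. For part~(1) I would argue both directions from this regularity: if $g_X(x) > t$ then right-continuity yields $g_X(y) > t$ for $y$ slightly larger than $x$, whence $X^*(t) > x$; conversely, if $g_X(x) \leqslant t$ then monotonicity gives $g_X(y) \leqslant t$ for all $y \geqslant x$, so $\{y : g_X(y) > t\} \subseteq [0,x)$ and $X^*(t) \leqslant x$. Parts~(2) and~(3) then follow immediately: monotonicity in $t$ and in $X$ comes from the nestedness of the defining sets $\{y : g_X(y) > t\}$, and the scaling identity $(X/x)^*(t) = X^*(t)/x$ from the change of variable $y \mapsto xy$ inside the supremum.

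For part~(4) I would prove the first inequality via the layer-cake formula $\be_\mu[X^p] = \int_0^\infty p\,x^{p-1}\bp{X>x}\,dx$: inserting the hypothesis $\bp{X>x}\leqslant \beta\,\bp{Y>\gamma x}$ and rescaling the integration variable produces exactly $\beta\gamma^{-p}\be_\mu[Y^p]$, which rearranges to $\be_\mu[Y^p]\geqslant \beta^{-1}\gamma^p\be_\mu[X^p]$. The second inequality is a quantile form of Markov's inequality: for any $a < X^*(t)$, part~(1) gives $\bp{X>a} > t$, so $\be_\mu[X^p] \geqslant a^p\bp{X>a} > t\,a^p$, and letting $a \uparrow X^*(t)$ yields $\be_\mu[X^p]\geqslant t\,X^*(t)^p$.

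Part~(5) is the heart of the proposition, and I would isolate a single-index lemma first. Fix $i$ and set $\tau_i := (f_i(t)/\beta_i)^{1/\delta_i}$ and $c_i := (\alpha_i/\gamma_i)\,Y^*(\tau_i)$. Evaluating \eqref{Edecrear0} at $x = Y^*(\tau_i)/\gamma_i$ (so that $\alpha_i x = c_i$ and $\gamma_i x = Y^*(\tau_i)$) and using $g_Y(Y^*(\tau_i)) \leqslant \tau_i$ from part~(1), I obtain $f_i(g_X(c_i)) \leqslant \beta_i\,\tau_i^{\delta_i} = f_i(t)$; inverting the monotone $f_i$ forces $g_X(c_i)\leqslant t$, and part~(1) then gives $X^*(t)\leqslant c_i$. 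When \eqref{Edecrear0} holds for \emph{all} $i$, this bound holds for every $i$ simultaneously, so taking the smallest yields $X^*(t)\leqslant \min_i c_i$, proving the min assertion. (The one point to watch here is that inverting $f_i$ uses strict monotonicity, so that $f_i(a)\leqslant f_i(t)$ really gives $a\leqslant t$.)

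The hard part will be the max assertion, where \eqref{Edecrear0} supplies, for each $x$, only \emph{some} admissible index $i(x)$. The same computation yields the robust implication: for every $x$, if $g_X(\alpha_{i(x)}x) > t$ then $g_Y(\gamma_{i(x)}x) > \tau_{i(x)}$, hence $\gamma_{i(x)}x < Y^*(\tau_{i(x)})$ and so $\alpha_{i(x)}x < c_{i(x)} \leqslant R := \max_i c_i$. I would then argue by contradiction: if $X^*(t) > R$, part~(1) gives $g_X(w) > t$ for all $w < X^*(t)$, and the aim is to choose $x$ so that the a priori unknown value $\alpha_{i(x)}x$ lands in the window $[R, X^*(t))$, which simultaneously forces $g_X(\alpha_{i(x)}x) > t$ (by monotonicity, since $\alpha_{i(x)}x < X^*(t)$) and $\alpha_{i(x)}x < R$ (by the robust implication), a contradiction. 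Reconciling the $x$-dependence of $i(x)$ with the single target level $t$ is exactly the delicate step: one must control how $\alpha_{i(x)}x$ moves as $x$ sweeps across the finitely many branches $\alpha_1 x,\dots,\alpha_N x$, transferring control via the monotonicity of $g_X$ from the scale actually bounded by \eqref{Edecrear0} to the scale under consideration. I expect this index-bookkeeping, rather than any analytic estimate, to be the main obstacle.
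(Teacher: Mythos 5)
Your treatment of parts (1)--(4) is correct and complete: the right-continuity argument for (1), the layer-cake computation and the quantile form of Markov's inequality for (4) are all sound. (The paper states Proposition~\ref{Pdecrear} without proof, so there is nothing to compare against; your arguments are the standard ones.) Your caveat about inverting $f_i$ is also well taken: ``nondecreasing'' is not enough to pass from $f_i(\bp{X > c_i}) \leqslant f_i(t)$ to $\bp{X > c_i} \leqslant t$ (a constant $f_i$ makes \eqref{Edecrear0} vacuous while the conclusion stays nontrivial), so $f_i$ must be read as strictly increasing; in the paper's applications $f_i$ is the identity, so this costs nothing. With that reading, your proof of the single-index bound $X^*(t) \leqslant c_i$ and hence of the ``min'' assertion is correct.

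The genuine gap is the ``max'' assertion, which you explicitly leave open, and the obstacle you identify is not mere bookkeeping: no choice of $x$ can close your contradiction argument, because the claim is false as written once the scales $\alpha_i, \gamma_i$ vary with $i$. Take $N = 2$, $f_i = \mathrm{id}$, $\beta_i = \delta_i = 1$, $(\alpha_1,\gamma_1) = (4,4)$, $(\alpha_2,\gamma_2) = (2,1)$, and the constant variables $X \equiv 4$, $Y \equiv 1$. For every $x > 0$ at least one of the two inequalities \eqref{Edecrear0} holds (the first fails only for $x \in [1/4,1)$, the second only for $x \in [1,2)$, and these sets are disjoint), yet for $t \in (0,1)$ one has $X^*(t) = 4$ while $\max_i (\alpha_i/\gamma_i) Y^*(t) = 2$. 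The failure mode is exactly the one you ran into: as $x$ sweeps upward, the active scale $\alpha_{i(x)}x$ jumps over the window $[\max_i c_i,\, X^*(t))$. The statement, and your single-point argument, do go through whenever the hypothesis can be tested at one common point --- in particular when $\alpha_i \equiv \alpha$ and $\gamma_i \equiv \gamma$ do not depend on $i$: setting $x_0 := \max_i Y^*(\tau_i)/\gamma$, whichever index is valid at $x_0$ gives $f_i(\bp{X > \alpha x_0}) \leqslant \beta_i \bp{Y > \gamma x_0}^{\delta_i} \leqslant \beta_i \tau_i^{\delta_i} = f_i(t)$, hence $X^*(t) \leqslant \alpha x_0$. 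That restricted form is all that is used in Proposition~\ref{Pbounds}(1) and Lemma~\ref{Lbounds} (where the $\alpha_i$ and $\gamma_i$ are index-independent), so you should prove the max-version under that hypothesis rather than attempt the general index-dependent case.
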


\begin{proof}
These properties are shown using the definitions via
straightforward arguments, and so we omit the proofs, except for the
final part. By assumption there exists at least one $i$ such that if
$\bp{X > \alpha_i x} > t$ for some $t$, then $\beta_i \bp{Y > \gamma_i
x}^{\delta_i} > f_i(t)$ since $f_i$ is nondecreasing. For this choice of
$i$, we obtain:
\begin{align*}
\alpha_i^{-1} \{ y : \ \bp{X > y} > t \} \subset &\ \gamma_i^{-1} \{ y :
\ \beta_i \bp{Y > y}^{\delta_i} > f_i(t) \}\\
= &\ \gamma_i^{-1} \{ y : \ \bp{Y > y} > (f_i(t)/\beta_i)^{1/\delta_i} \}
\end{align*}

\noindent (where we only consider $y \geqslant 0$). Therefore for all $t
\in [0,1]$,
\[
\{ y \geqslant 0 : \ \bp{X > y} > t \} \quad \subset \quad
\bigcup_{i=1}^N\ \frac{\alpha_i}{\gamma_i} \{ y \geqslant 0 : \ \bp{Y >
y} > (f_i(t)/\beta_i)^{1/\delta_i} \}.
\]

\noindent Taking the supremum of both sides yields
Equation~\eqref{Edecrear}. If on the other hand
Equation~\eqref{Edecrear0} holds for all $i$, then the preceding
inclusion holds with the union replaced by intersection. Now taking the
supremum of both sides yields Equation~\eqref{Edecrear} with maximum
replaced by minimum (since each set in the intersection is an interval
containing $0$).
\end{proof}

Using Proposition~\ref{Pdecrear}, we now show one of the main results in
this paper.

\begin{proof}[Proof of Theorem~\ref{Thj2}]
Note for all $n$ that
\[
d_\G(z_0, z_0 X_n) \leqslant d_\G(z_1, z_0 S_{n-1}) + d_\G(z_1, z_0 S_n),
\]
from which we obtain
\[
d_\G(z_0, z_0 X_n)^p \leqslant 2^{p+1} \sup_{n \in A} d_\G(z_1, z_0 S_n)^p.
\]

\noindent Taking first the supremum over $n \in A$ and then the
expectation proves the backward implication.
Conversely, first claim that controlling sums of $\G$-valued $L^p$ random
variables in probability (i.e., in $L^0$) allows us to control these sums
in $L^p$ as well, for $p>0$. Namely, we make the following claim:\medskip

\textit{Suppose $(\G, d_\G)$ is a separable metric semigroup, $p \in
(0,\infty)$, and $X_1, \dots, X_n$ $\in L^p(\Omega,\G)$ are independent.
Now fix $z_0, z_1 \in \G$ and let $S_k, U_n, M_n$ be as in
Definition~\ref{Dsemi} and Equation~\eqref{Esemigroup}. Then,}
\[
\be_\mu[U_n^p] \leqslant 2^{1 + 2p} (\be_\mu[M_n^p] +
U_n^*(2^{-1-2p})^p).
\]

Note that the claim is akin to the upper bound by Hoffmann-J{\o}rgensen
that bounds $\be_\mu[\| S_n \|^p]$ in terms of $\be_\mu[M_n^p]$ and the
quantiles of $\| S_n \|$ for Banach space-valued random variables  (see
\cite[proof of Theorem 3.1]{HJ} and \cite[Lemma 3.1]{GZ}).
We omit its proof for brevity, as a similar statement is asserted in
\cite[Proposition 6.8]{LT}.
Given the claim, define:
\begin{align}\label{Edefma}
\begin{aligned}
t_n := & U_n^*(2^{-1-2p}) \quad (n \in A),\qquad
U_A := \sup_{n \in A} d_\G(z_1, z_0 S_n), \\
M_A := &\ \sup_{n \in A} d_\G(z_0, z_0 X_n), \qquad \quad
t_A := U_A^*(2^{-1-2p}),
\end{aligned}
\end{align}

\noindent as above, where we also use the assumption that $U_A < \infty$
almost surely. Now for all $n \in A$, compute using the above claim and
elementary properties of decreasing rearrangements:
\[
\be_\mu[U_n^p] \leqslant 2^{1+2p} \be_\mu[M_n^p] + 2 (4 t_n)^p \leqslant
2^{1+2p} \be_\mu[M_A^p] + 2 (4 t_A)^p.
\]

This concludes the proof if $A$ is finite; for $A = \N$, use the Monotone
Convergence Theorem for the increasing sequence $0 \leqslant U_n^p \to
U_A^p$.
\end{proof}

\subsection{Two-sided bounds and $L^p$ norms}

We now formulate and prove additional results that control tail behavior
for metric semigroups and monoids -- specifically, $M_A, U_n, U_n^*$.
This includes proving our other main result, Theorem~\ref{Tupq}. We begin
by setting notation.

\begin{definition}\label{Dtrunc}
Suppose $\G$ is a metric semigroup.
\begin{enumerate}
\item Given $X_n \in L^0(\Omega,\G)$ as above, for all $n$ in a finite or
countable set $A$, define the random variable $\ell_X = \ell_{(X_n)} :
\mathbb{R} \to [0,\infty]$ via:
\[
\ell_X(t) := \begin{cases}
\inf \{ y > 0\ : \ \sum_{n \in A} \bp{d_\G(z_0, z_0 X_n) >
y} \leqslant t \}, \qquad & \text{if } t \in [0,1],\\
0, & \text{otherwise.}
\end{cases}
\]
As indicated in \cite[\S 2]{HM}, one then has: $\mathbb{P}(\ell_X > x) =
\sum_{n \in A} \bp{d_\G(z_0, z_0 X_n) > x}$, where $\mathbb{P}$ is the
Lebesgue measure on $[0,1]$.

\item Two families of variables $P(t)$ and $Q(t)$ are said to be
\textit{comparable}, denoted by $P(t) \approx Q(t)$, if there exist
constants $c_1, c_2 > 0$ such that $c_1^{-1} P(t) \leqslant Q(t)
\leqslant c_2 P(t)$ uniformly over all $t$. The $c_i$ are called the
``constants of approximation''.\medskip

\noindent \textit{For the remaining definitions, assume $(\G, 1_\G,
d_\G)$ is a separable metric monoid.}\medskip

\item Given $t \geqslant 0$ and a random variable $X
\in L^0(\Omega, \G)$, define its \textit{truncation} to be:
\[
X(t) := \begin{cases} 1_\G, \qquad & \mbox{ if } d_\G(1_\G, X) > t,\\
X, & \mbox{ otherwise.}
\end{cases}
\]

\item Given variables $X_1, \dots, X_n : \Omega \to \G$, and $r \in
(0,1)$, define:
\[
U'_n(r) := \max_{1 \leqslant k \leqslant n} d_\G(1_\G, \prod_{i=1}^k
X_i(\ell_X(r))).
\]
\end{enumerate}
\end{definition}

The following estimate on tail behavior compares $U_n$ with its
decreasing rearrangement.

\begin{theorem}\label{Tbounds}
Given $p_0 > 0$, there exist universal constants of approximation
(depending only on $p_0$), such that for all $p \geqslant p_0$, separable
abelian metric monoids $(\G, 1_\G, d_\G)$, and finite sequences $X_1,
\dots, X_n$ of independent $\G$-valued random variables (for any $n \in
\N$),
\[
\be_\mu[U_n^p]^{1/p} \approx U_n^*(e^{-p}/4) + \be[\ell_X^p]^{1/p}
\approx (U'_n(e^{-p}/8))^* (e^{-p}/4) + \be[\ell_X^p]^{1/p},
\]

\noindent where $U_n$ and $U'_n$ were defined in Equation
\eqref{Esemigroup} and Definition \ref{Dtrunc} respectively.
\end{theorem}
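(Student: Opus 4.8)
The plan is to establish the two claimed comparisons in turn: first $\be_\mu[U_n^p]^{1/p} \approx U_n^*(e^{-p}/4) + \be_\mu[\ell_X^p]^{1/p}$, and then to trade $U_n^*$ for the truncated quantile $(U'_n(e^{-p}/8))^*$. Throughout I would pass to the enveloping metric monoid $\G'$ of Remark~\ref{Rstrict}, so that the truncations $X_i(t)$ and the identity $1_\G$ of Definition~\ref{Dtrunc} are available, and I would repeatedly use that $Y_i = d_\G(z_0, z_0 X_i) = d_{\G'}(1', X_i)$ equals the increment $d_\G(z_0 S_{i-1}, z_0 S_i)$ by Proposition~\ref{Cstrict}, together with the triangle inequality of Lemma~\ref{Linverse}. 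For the lower bound in the first comparison, $\be_\mu[U_n^p] \geqslant (e^{-p}/4)\, U_n^*(e^{-p}/4)^p$ is immediate from Proposition~\ref{Pdecrear}(4), and $(e^{-p}/4)^{1/p} \geqslant e^{-1} 4^{-1/p_0}$ is a positive universal constant for $p \geqslant p_0$; this gives $\be_\mu[U_n^p]^{1/p} \gtrsim U_n^*(e^{-p}/4)$. To get $\be_\mu[U_n^p]^{1/p} \gtrsim \be_\mu[\ell_X^p]^{1/p}$, I would compare $\ell_X$ with the maximal jump $M_n$: the identity $\bp{\ell_X > x} = \sum_i \bp{d_\G(z_0, z_0 X_i) > x}$ and the Bonferroni bounds $\bp{M_n > x} \leqslant \sum_i \bp{Y_i > x}$ and $1 - \prod_i(1 - \bp{Y_i > x}) \geqslant c\min(1, \sum_i \bp{Y_i > x})$ show that $M_n$ and $\ell_X$ have comparable tails, hence $\be_\mu[M_n^p] \approx \be_\mu[\ell_X^p]$ via Proposition~\ref{Pdecrear}(4); finally $Y_i = d_\G(z_0 S_{i-1}, z_0 S_i) \leqslant U_{i-1} + U_i \leqslant 2 U_n$ (handling $i=1$ separately) gives $M_n \lesssim U_n$.

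For the upper bound $\be_\mu[U_n^p]^{1/p} \lesssim U_n^*(e^{-p}/4) + \be_\mu[\ell_X^p]^{1/p}$ I would run the iteration built into Theorem~\ref{Thj}. Taking all $t_i$ equal to a single quantile $t := U_n^*(e^{-p}/4)$, choosing $s$ of the order of a high quantile of $M_n$, and taking the multiplicities $n_i$ comparable to $p$, the product terms $\tfrac{1}{n_i!}\bigl(\bp{U_n > t}/\bp{U_n \leqslant t}\bigr)^{n_i}$ decay geometrically. Stirling's formula $n_i! \sim (n_i/e)^{n_i}$ is precisely what converts the threshold $\bp{U_n > t} \leqslant e^{-p}/4$ into a summable, geometrically small bound, and is the mechanism producing the $e^{-p}$-scale quantile level. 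Feeding the resulting tail inequality into Proposition~\ref{Pdecrear}(5) bounds $U_n^*(t)$ by a universal multiple of $U_n^*(e^{-p}/4) + M_n^*(\cdot)$, and Proposition~\ref{Pdecrear}(4) then integrates this into the moment estimate $\be_\mu[U_n^p]^{1/p} \lesssim U_n^*(e^{-p}/4) + \be_\mu[M_n^p]^{1/p}$; the already-established $\be_\mu[M_n^p]^{1/p} \approx \be_\mu[\ell_X^p]^{1/p}$ closes this half.

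For the second comparison, only the quantiles $U_n^*(e^{-p}/4)$ and $(U'_n(e^{-p}/8))^*(e^{-p}/4)$ differ. Using abelianness I would factor, for each $k$, the partial product as $S_k = \bigl(\prod_{i\leqslant k} X_i(\ell_X(r))\bigr)\cdot\prod_{i\leqslant k:\, Y_i > \ell_X(r)} X_i$, so that Remark~\ref{Rstrict} yields $d_\G\bigl(z_0 S_k,\, z_0 \prod_{i\leqslant k} X_i(\ell_X(r))\bigr) \leqslant \sum_{i:\, Y_i > \ell_X(r)} Y_i$; hence $U_n$ and $U'_n(r)$ differ by at most the sum of the large jumps. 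The rearrangement estimates of Proposition~\ref{Pdecrear}, together with the tail identity for $\ell_X$, show that at level $r = e^{-p}/8$ this sum of large jumps is controlled by $\be_\mu[\ell_X^p]^{1/p}$, which is exactly absorbed into the common summand; comparing the two quantiles at level $e^{-p}/4$ then follows from Proposition~\ref{Pdecrear}(1)--(2).

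The main obstacle is that the absence of inverses and subtraction rules out all the ``$S_k - S'_k$'', centering, and symmetrization manipulations of the Banach-space proofs: every perturbation estimate must instead be routed through the embedding $\G \hookrightarrow \G'$ and the triangle inequality, and the truncation comparison in particular hinges on the abelian factorization of $S_k$ into its truncated part and its large-jump part. A secondary delicate point is bookkeeping of the multiplicities $n_i$ in Theorem~\ref{Thj} so that the Stirling asymptotics deliver constants depending only on $p_0$ --- and not on $\G$, on $n$, or on the law of the $X_i$ --- at precisely the quantile levels $e^{-p}/4$ and $e^{-p}/8$ appearing in the statement.
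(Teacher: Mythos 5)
Your proposal is correct in outline and follows essentially the same architecture as the paper's proof: the lower bounds come from $\be_\mu[U_n^p] \geqslant t\,U_n^*(t)^p$ (Proposition~\ref{Pdecrear}(4)) together with $M_n \leqslant 2U_n$ and the quantile comparison $\ell_X(2t) \leqslant M_n^*(t) \leqslant \ell_X(t)$ (Proposition~\ref{Pbounds}(1)), while the upper bounds come from iterating the Hoffmann-J{\o}rgensen inequality with multiplicities of order $p$ so that the factorial produces the $e^{-p}$ quantile scale --- this is exactly the content of Lemma~\ref{Lbounds} and the adaptation of Hitczenko--Montgomery-Smith that the paper invokes. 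The one place where you genuinely diverge is the comparison of $U_n^*$ with $(U'_n(e^{-p}/8))^*$: you factor $S_k$ (using commutativity) into its truncated part and its large-jump part and control the latter in $L^p$ by $\be_\mu[\ell_X^p]^{1/p}$, which is essentially Proposition~\ref{Pbounds2}. The paper instead uses the cheaper observation that $\bp{U_n \neq U'_n(e^{-p}/8)} \leqslant \bp{M_n > \ell_X(e^{-p}/8)} \leqslant e^{-p}/8$, so the two variables coincide off an event of mass $e^{-p}/8$ and their quantiles at levels $e^{-p}/4$ and $e^{-p}/8$ compare directly via Proposition~\ref{Pdecrear}(1); this avoids any moment estimate on the large jumps for that step (your factorization is still the right tool for the deferred upper-bound half). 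Two details you should nail down if you write this out in full: the constraint $\sum_i n_i \leqslant n+1$ in Theorem~\ref{Thj} means you cannot always take the multiplicities comparable to $p$ without padding the sequence by copies of $1_{\G'}$ (harmless in the enveloping monoid, but it must be said); and your claim that the $L^p$ norm of the sum of large jumps is ``absorbed into the common summand'' needs the explicit bound of Proposition~\ref{Pbounds2}(2) rather than just Proposition~\ref{Pdecrear}, since a quantile-level statement alone does not control that sum in $L^p$.
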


\noindent For real-valued $X$, the expression $\be[|X|^p]^{1/p}$ is also
denoted by $\| X \|_p$ in the literature.

To show Theorem~\ref{Tbounds}, we require some preliminary results which
provide additional estimates to govern tail behavior, and which we now
collect before proving the theorem. As these preliminaries are often
extensions to metric semigroups of results in the Banach space
literature, we will sketch or omit their proofs now.

The first result obtains two-sided bounds to control the behavior of the
``maximum magnitude'' $M_A$ (cf.~Equation~\eqref{Edefma}).

\begin{prop}\label{Pbounds}
Suppose $\{ X_n : n \in A \}$ is a (finite or countably infinite)
sequence of independent random variables with values in a separable
metric semigroup $(\G, d_\G)$.
\begin{enumerate}
\item For all $t \in (0,1)$, $\ell_X(2t) \leqslant \ell_X(t/(1-t))
\leqslant M_A^*(t) \leqslant \ell_X(t)$.

\item Suppose $X_n \in L^p(\Omega,\G)$ for some $p>0$ (and for all $n \in
A$). For all $t > 0$, define:
\[
\Psi_X(t) := p \sum_{n \in A} \int_{\ell_X(t)}^\infty u^{p-1}
\bp{d_\G(z_0, z_0 X_n) > u}\ du.
\]

\noindent Then, $\displaystyle \frac{t \ell_X(t)^p + \Psi_X(t)}{1+t}
\leqslant \be_\mu[M_A^p] \leqslant \ell_X(t)^p + \Psi_X(t)$.
\end{enumerate}
\end{prop}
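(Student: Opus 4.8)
The plan is to reduce everything to two elementary tail comparisons between the ``maximum'' $M_A := \sup_{n \in A} d_\G(z_0, z_0 X_n)$ and the auxiliary variable $\ell_X$, and then feed these into the layer-cake formula $\be_\mu[Z^p] = p\int_0^\infty u^{p-1}\bp{Z > u}\,du$. Writing $g(x) := \sum_{n \in A} \bp{d_\G(z_0, z_0 X_n) > x}$, the key observation (valid by independence, via the Weierstrass bounds $1 - \prod(1-p_n) \leqslant \sum p_n$ and $\prod(1-p_n) \leqslant e^{-\sum p_n} \leqslant (1 + \sum p_n)^{-1}$) is the two-sided pointwise estimate
\[
\frac{g(x)}{1+g(x)} \leqslant \bp{M_A > x} \leqslant g(x) = \bp{\ell_X > x}, \qquad x > 0,
\]
where the rightmost equality is the identity recorded in Definition~\ref{Dtrunc}(1). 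First I would establish this display; the main subtlety is to justify the infinite-product manipulations and measurability in the countably infinite case, and to allow for $+\infty$ values when the $X_n$ are not in $L^p$.

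For part (1), I would combine the above display with Proposition~\ref{Pdecrear}(1) and the two defining properties of $\ell_X$: it is non-increasing and right-continuous, so that $g(\ell_X(t)) \leqslant t$ while $g(u) > t$ for every $u < \ell_X(t)$. The rightmost inequality $M_A^*(t) \leqslant \ell_X(t)$ follows since $\bp{M_A > \ell_X(t)} \leqslant g(\ell_X(t)) \leqslant t$, which Proposition~\ref{Pdecrear}(1) converts into $M_A^*(t) \leqslant \ell_X(t)$. For the middle inequality, for any $y > M_A^*(t)$ Proposition~\ref{Pdecrear}(1) gives $\bp{M_A > y} \leqslant t$; plugging into the lower bound $g(y)/(1+g(y)) \leqslant \bp{M_A > y}$ and solving yields $g(y) \leqslant t/(1-t)$, so every such $y$ lies in the set defining $\ell_X(t/(1-t))$ and hence $\ell_X(t/(1-t)) \leqslant M_A^*(t)$. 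The leftmost inequality $\ell_X(2t) \leqslant \ell_X(t/(1-t))$ is then immediate from the monotonicity of $\ell_X$ together with the elementary bound $t/(1-t) \leqslant 2t$ on the relevant range $t \leqslant 1/2$.

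For part (2), I would split the layer-cake integral for $\be_\mu[M_A^p]$ at the threshold $\ell_X(t)$. On the upper range $u \geqslant \ell_X(t)$ one has $g(u) \leqslant t$, so the bounds $\bp{M_A > u} \leqslant g(u)$ and $\bp{M_A > u} \geqslant g(u)/(1+g(u)) \geqslant g(u)/(1+t)$ produce exactly $\Psi_X(t)$ and $\Psi_X(t)/(1+t)$ respectively after interchanging sum and integral. On the lower range $u < \ell_X(t)$ one has $g(u) > t$, so $\bp{M_A > u} \leqslant 1$ gives the term $\ell_X(t)^p$ for the upper bound, while $\bp{M_A > u} \geqslant g(u)/(1+g(u)) \geqslant t/(1+t)$ (monotonicity of $s \mapsto s/(1+s)$) gives the term $\tfrac{t}{1+t}\ell_X(t)^p$ for the lower bound. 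Adding the two ranges yields the claimed two-sided estimate.

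The step I expect to be the main obstacle is establishing the lower tail bound $\bp{M_A > x} \geqslant g(x)/(1+g(x))$ cleanly and uniformly -- in particular justifying the product estimate and its limiting form in the countably infinite setting, and verifying that the pointwise bounds point in the correct direction on each side of $\ell_X(t)$. Once that display is in hand, both parts are short bookkeeping with the layer-cake formula and Proposition~\ref{Pdecrear}.
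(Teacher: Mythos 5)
Your proof is correct and follows essentially the same route as the sources the paper defers to for this result (\cite[Proposition 1]{HM} for part (1) and \cite[Lemma 3.2]{GZ} for part (2)): the two-sided tail comparison $g/(1+g) \leqslant \bp{M_A > x} \leqslant g$ obtained from independence, combined with Proposition~\ref{Pdecrear}(1) and the layer-cake formula split at $\ell_X(t)$. The only point worth flagging is one you already noticed: the leftmost inequality $\ell_X(2t) \leqslant \ell_X(t/(1-t))$ follows from monotonicity only when $t/(1-t) \leqslant 2t$, i.e.\ for $t \leqslant 1/2$, whereas the proposition is stated for all $t \in (0,1)$; for $t > 1/2$ the inequality reverses, so the statement should really be read (and is only ever used in the paper) on the range $t \leqslant 1/2$.
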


\begin{proof}
The first part follows \cite[Proposition 1]{HM} (using a special case of
Equation~\eqref{Edecrear}). For the second, follow the arguments for
showing \cite[Lemma 3.2]{GZ}; see also \cite[Lemma 6.9]{LT}.
\end{proof}

We next discuss a consequence of Hoffmann-J{\o}rgensen's inequality for
metric semigroups, Theorem~\ref{Thj}, which can be used to bound the
$L^p$-norms of the variables $U_n$ -- or more precisely, to relate these
$L^p$-norms to the tail distributions of $U_n$ via $U_n^*$.

\begin{lemma}\label{Lbounds}
(Notation as in Definition~\ref{Dsemi} and Equation~\eqref{Esemigroup}.)
There exists a universal positive constant $c_1$ such that for any $0
\leqslant t \leqslant s \leqslant 1/2$, any separable metric semigroup
$(\G, d_\G)$ with elements $z_0, z_1$, and any sequence of independent
$\G$-valued random variables $X_1, \dots, X_n$,
\[
U_n^*(t) \leqslant c_1 \frac{\log(1/t)}{\max \{ \log(1/s), \log \log(4/t)
\}} ( U_n^*(s) + M_n^*(t/2)).
\]
\end{lemma}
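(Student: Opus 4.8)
The plan is to deduce the lemma from the Hoffmann--J{\o}rgensen inequality (Theorem~\ref{Thj}) applied with a single threshold ($k=1$), an appropriate multiplicity $N=n_1$, and the two quantiles $t_1:=U_n^*(s)$ and $M_n^*(t/2)$ (the latter playing the role of the parameter ``$s$'' in Theorem~\ref{Thj}). By Proposition~\ref{Pdecrear}(1) it suffices to produce a threshold $\theta$ with $\bp{U_n>\theta}\leqslant t$ and $\theta\leqslant c_1\tfrac{\log(1/t)}{\max\{\log(1/s),\log\log(4/t)\}}(U_n^*(s)+M_n^*(t/2))$, for then $U_n^*(t)\leqslant\theta$. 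Throughout I assume $t\in(0,s]$, the case $t=0$ being a limiting one.

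First I would set $a:=U_n^*(s)$ and $\tau:=M_n^*(t/2)$, so that $\bp{U_n>a}\leqslant s\leqslant 1/2$ and $\bp{M_n>\tau}\leqslant t/2$ by Proposition~\ref{Pdecrear}(1). For an integer $1\leqslant N\leqslant n+1$, apply Theorem~\ref{Thj} with $k=1$, $n_1=N$, $t_1=a$, and threshold $\tau$; the left-hand event becomes $\{U_n>(2N-1)a+(N-1)\tau\}$ and the conclusion reads
\[
\bp{U_n>(2N-1)a+(N-1)\tau}\leqslant \bp{M_n>\tau}+E_N,
\]
where $E_N$ is the product term. The key step is to bound $E_N$ \emph{two} ways, uniformly over the dichotomy defining $I_0$. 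Regardless of whether $1\in I_0$, one has $E_N\leqslant \bp{U_n>a}^N\leqslant s^N$ (trivially when $1\in I_0$, and by cancelling the factor $1/\bp{U_n\leqslant a}^{N-1}$ against $1/N!$ when $1\notin I_0$). Since moreover $\bp{U_n>a}\leqslant 1/2\leqslant\bp{U_n\leqslant a}$, one also gets $E_N\leqslant 1/N!$ (when $1\in I_0$ via the defining inequality $\bp{U_n\leqslant a}^{N-1}\leqslant 1/N!$, and when $1\notin I_0$ by replacing $\bp{U_n>a}$ by $\bp{U_n\leqslant a}$ in the numerator). Hence $E_N\leqslant\min\{s^N,\,1/N!\}$.

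I would then choose $N$ to be the least integer making $\min\{s^N,1/N!\}\leqslant t/2$. The two competing requirements are $s^N\leqslant t/2$, which needs $N\geqslant\log(2/t)/\log(1/s)$, and $1/N!\leqslant t/2$, which (via $N!\geqslant (N/e)^N$) needs only $N\gtrsim\log(2/t)/\log\log(2/t)$; taking the better of the two yields $N\leqslant c\,\log(1/t)/\max\{\log(1/s),\log\log(4/t)\}$. With this $N$ the display gives $\bp{U_n>(2N-1)(a+\tau)}\leqslant t/2+t/2=t$, and since $2N-1\leqslant c_1\log(1/t)/\max\{\log(1/s),\log\log(4/t)\}$ the desired quantile bound follows from Proposition~\ref{Pdecrear}(1). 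Reconciling the integer rounding of $N$ and the Stirling step with the precise constant is routine but needs some care.

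The main obstacle is the constraint $N\leqslant n+1$ in Theorem~\ref{Thj}: for $t<\min\{2s^{n+1},\,2/(n+1)!\}$ the optimal $N$ exceeds $n+1$, so the choice above is unavailable. On this very-small-$t$ range I would argue deterministically instead. The triangle inequality (Remark~\ref{Rstrict}) gives $U_n\leqslant d_\G(z_1,z_0)+nM_n$ pointwise, whence $U_n^*(t)\leqslant d_\G(z_1,z_0)+nM_n^*(t/2)$ by monotonicity of rearrangements (Proposition~\ref{Pdecrear}(2)); and since $\bp{U_n>a}+\bp{M_n>\tau}\leqslant s+t/2<1$ leaves positive probability for $U_n\leqslant a$ and $M_n\leqslant\tau$ to hold together, the \emph{constant} $d_\G(z_1,z_0)\leqslant U_n+M_n$ is bounded by $U_n^*(s)+M_n^*(t/2)$ there. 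Thus $U_n^*(t)\leqslant(n+1)(U_n^*(s)+M_n^*(t/2))$, and a short estimate shows that on exactly this range the prefactor $\log(1/t)/\max\{\log(1/s),\log\log(4/t)\}$ is itself $\gtrsim n$, so the fallback bound is absorbed into $c_1$. Verifying this last inequality in each subregime ($t<2s^{n+1}$ versus $t<2/(n+1)!$) is where most of the bookkeeping lies.
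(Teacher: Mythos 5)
Your proposal is correct and follows essentially the same route as the paper: the paper also specializes Theorem~\ref{Thj} to $k=1$ with multiplicity $K$, bounds the product term by $\min\bigl\{(2\bp{U_n>t_1})^K/K!,\ \bp{U_n>t_1}^K\bigr\}$-type expressions, and then optimizes $K$ exactly as in Hitczenko--Montgomery-Smith's Corollary~1, which is the $N$-optimization you carry out explicitly. The only substantive difference is your deterministic fallback for the constraint $N\leqslant n+1$; this works, though it can be avoided more cheaply by padding the sequence with constant random variables equal to the identity of the associated monoid $\G'$ (Proposition~\ref{Cstrict}), which leaves $U_n$ and $M_n$ unchanged and makes the inequality available for all $N$.
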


\begin{proof}
We begin by writing down a consequence of Theorem~\ref{Thj}:
\begin{equation}\label{Eunr}
\bp{U_n > (3K-1)t} \leqslant \frac{1}{K!} \left( \frac{\bp{U_n >
t}}{\bp{U_n \leqslant t}} \right)^K + \bp{M_n > t}, \quad \forall t > 0,\
\forall K,n \in \N.
\end{equation}

\noindent If $\bp{U_n > t} \leqslant 1/2$, then this quantity is further
dominated by
\[
2 \max \left\{ \bp{M_n > t}, \frac{1}{K!} (2 \bp{U_n > t})^K \right\}.
\]

\noindent Now carry out the steps mentioned in the proof of
\cite[Corollary 1]{HM}.
\end{proof}

The final preliminary result is proved by adapting the proofs of
\cite[Lemma 3 and Corollary 2]{HM} to metric monoids.

\begin{prop}\label{Pbounds2}
Suppose $(\G, 1_\G, d_\G)$ is a separable metric monoid and $X_1, \dots,
X_n: \Omega \to \G$ is a finite sequence of independent $\G$-valued
random variables. For $r \in (0,1)$, define:
\[
U''_n(r) := \max_{1 \leqslant k \leqslant n} d_\G(1_\G, \prod_{i=1}^k
X_i'(\ell_X(r))),
\]

\noindent where $X'_i(t)$ equals $1_\G$ if $d_\G(1_\G, X_i) \leqslant t$,
and $X_i$ otherwise.
\begin{enumerate}
\item Then $U''_n(r)$ may be expressed as the sum of ``disjoint'' random
variables $V_k$ for $k \in \N$. In other words, $\Omega$ can be
partitioned into measurable subsets $E_k$ such that $V_k = U''_n(r)$ on
$E_k$ and $1_\G$ otherwise. Moreover, the $V_k$ may be chosen such that
$V_k^*(t) \leqslant k \cdot \ell(t (k-1)! / r^{k-1})$.

\item Given the assumptions, for all $p \in (0,\infty)$,
\[
\be_\mu[U''_n(r)^p]^{1/p} \leqslant 2 e^{2^p r/p}
\be[\ell_X^p]^{1/p}.
\]
\end{enumerate}
\end{prop}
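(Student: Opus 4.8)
The plan is to prove part (1) by decomposing $\Omega$ according to the number of factors that survive the ``reverse truncation'' $X_i \mapsto X'_i(\ell_X(r))$, and then to deduce part (2) by exploiting the resulting disjoint supports. Write $y := \ell_X(r)$ and let $A_i := \{ d_\G(1_\G, X_i) > y \}$ be the event that the $i$th factor survives (i.e.\ $X'_i(y) = X_i$), with survival probability $q_i := \bp{A_i}$; by Definition~\ref{Dtrunc}(1) one has $\sum_i q_i = \bp{\ell_X > y} \leqslant r$. Since the $A_i$ are independent, I would set $E_k := \{\text{exactly } k \text{ of the } A_i \text{ occur}\}$, which partition $\Omega$, and define $V_k := U''_n(r)$ on $E_k$ and $V_k := 0$ off $E_k$; then $U''_n(r) = \sum_{k \geqslant 1} V_k$ pointwise, which is the asserted ``disjoint'' decomposition. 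On $E_k$ the product $\prod_{i \leqslant j} X'_i(y)$ has at most $k$ non-identity factors, so the triangle inequality of Remark~\ref{Rstrict} gives $U''_n(r) \leqslant \sum_{i : A_i} d_\G(1_\G, X_i)$ there, a sum of exactly $k$ ``large'' terms.

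The key estimate is a tail bound for $V_k$. If $V_k > k u$ with $u \geqslant y$, then some surviving factor has magnitude $> u$, say the $j$th, while $k-1$ of the remaining factors also survive; a union bound over $j$ together with independence yields
\[
\bp{V_k > ku} \leqslant \sum_j \bp{d_\G(1_\G, X_j) > u} \cdot \bp{\text{at least } k-1 \text{ of } \{A_i\}_{i \ne j} \text{ occur}}.
\]
I would then invoke the elementary bound that the probability that at least $m$ of independent events occur is at most $e_m(q) \leqslant (\sum_i q_i)^m / m!$ (Newton/Maclaurin), bounding the second factor by $r^{k-1}/(k-1)!$, and use $\sum_j \bp{d_\G(1_\G, X_j) > u} = \bp{\ell_X > u}$ for $u \geqslant y$, to get $\bp{V_k > ku} \leqslant \bp{\ell_X > u} \, r^{k-1}/(k-1)!$. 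Setting $u := \ell_X(t (k-1)! / r^{k-1})$ and using the defining property of $\ell_X$ (that $\bp{\ell_X > \ell_X(s)} \leqslant s$, the analogue of Proposition~\ref{Pdecrear}(1)) collapses the right-hand side to exactly $t$, so by Proposition~\ref{Pdecrear}(1) one obtains $V_k^*(t) \leqslant k\, \ell_X(t (k-1)!/r^{k-1})$, which is the claim. The one point requiring care is the hypothesis $u \geqslant y$: for $t < r^k/k!$ it holds because $r^k/k! \leqslant r^k/(k-1)!$, while for $t \geqslant r^k/k!$ the bound is vacuous, since $V_k^*(t) = 0$ once $t \geqslant \bp{V_k > 0}$ and $\bp{V_k > 0} \leqslant \bp{E_k} \leqslant e_k(q) \leqslant r^k/k!$.

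For part (2), disjointness of the supports $E_k$ gives $\be_\mu[U''_n(r)^p] = \sum_{k \geqslant 1} \be_\mu[V_k^p]$, and since $V_k$ has the same law as $V_k^*$ I would integrate the bound from part (1):
\[
\be_\mu[V_k^p] = \int_0^{r^k/k!} V_k^*(t)^p \, dt \leqslant k^p \int_0^{r^k/k!} \ell_X\!\left(\tfrac{t(k-1)!}{r^{k-1}}\right)^{p} dt = \frac{k^p r^{k-1}}{(k-1)!} \int_0^{r/k} \ell_X(s)^p \, ds,
\]
where the vanishing of $V_k^*$ beyond $r^k/k!$ and the substitution $s = t(k-1)!/r^{k-1}$ keep the domain inside $[0,1]$, so the last integral is at most $\be_\mu[\ell_X^p]$. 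Summing over $k$ and reindexing by $j = k-1$, the elementary inequality $j + 1 \leqslant 2^{j}$ gives $(j+1)^p \leqslant 2^{pj}$, whence $\sum_{k \geqslant 1} k^p r^{k-1}/(k-1)! = \sum_{j \geqslant 0} (j+1)^p r^j / j! \leqslant \sum_{j \geqslant 0} (2^p r)^j / j! = e^{2^p r}$. This produces $\be_\mu[U''_n(r)^p] \leqslant e^{2^p r} \be_\mu[\ell_X^p]$, and taking $p$th roots gives the stated bound with room to spare, since $e^{2^p r/p} \leqslant 2 e^{2^p r/p}$.

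The main obstacle is part (1): setting up the partition so that $U''_n(r)$ genuinely decomposes as a sum of disjointly supported pieces, and proving the per-block tail estimate with the correct constant $r^{k-1}/(k-1)!$ via the elementary symmetric polynomial bound. Everything downstream --- the $L^p$ integration, the change of variables confining the integral to $[0,1]$, and the summation --- is routine once that tail bound is in hand; the only subtlety there is tracking the edge case $u \geqslant \ell_X(r)$ so that the defining identity for $\ell_X$ applies.
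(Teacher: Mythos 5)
Your proof is correct and follows essentially the same route as the paper, which simply adapts the arguments of Hitczenko and Montgomery-Smith (\cite[Lemma 3 and Corollary 2]{HM}) to metric monoids: the partition by the number of surviving factors, the elementary symmetric function bound $e_m(q) \leqslant (\sum_i q_i)^m/m!$ for the per-block tail estimate, and the change of variables in the $L^p$ integration are exactly the intended steps. Your handling of the edge case $t \geqslant r^k/k!$ and the reduction to $\int_0^{r/k} \ell_X(s)^p\,ds \leqslant \be_\mu[\ell_X^p]$ is sound, and in fact yields the stated bound without needing the extra factor of $2$.
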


With the above results in hand, we can now show the above theorem.

\begin{proof}[Proof of Theorem~\ref{Tbounds}]
Compute using the triangle inequality~\eqref{Etriangle} and
Remark~\ref{Rstrict}:
\[
d_\G(1_\G, X_k) \leqslant d_\G(1_\G, S_{k-1}) + d_\G(1_\G, S_k) \leqslant
2 U_n.
\]

\noindent Hence $M_n \leqslant 2 U_n$. Now compute for $p \geqslant p_0$,
using Propositions~\ref{Pdecrear} and \ref{Pbounds}:
\begin{align*}
\be_\mu[U_n^p]^{1/p} \geqslant &\ \frac{1}{2} \be_\mu[M_n^p]^{1/p}
\geqslant 2^{-1-p_0^{-1}} \be[\ell_X^p]^{1/p},\\
\be_\mu[U_n^p]^{1/p} \geqslant &\ (e^{-p}/8)^{1/p} U_n^*(e^{-p}/8)
\geqslant 8^{-p_0^{-1}} e^{-1} U_n^*(e^{-p}/4).
\end{align*}

\noindent Hence there exists a constant $0 < c_1 = c_1(p_0)$ such that:
\[
\be_\mu[U_n^p]^{1/p} \geqslant c_1^{-1} (U_n^*(e^{-p}/4) +
\be[\ell_X^p]^{1/p}).
\]

This yields one inequality; another one is obtained using
Proposition~\ref{Pbounds} as follows:
\[
\bp{U_n \neq U'_n(e^{-p}/8)} \leqslant \mathbb{P}(M_n > \ell_X(e^{-p}/8))
\leqslant \bp{M_n > M_n^*(e^{-p}/8)} \leqslant e^{-p}/8.
\]

\noindent Now if
$\bp{U'_n(e^{-p}/8) > y} > \eta$ for some $\eta \in
[\frac{e^{-p}}{8},1]$, then by the reverse triangle inequality,
\begin{align*}
\bp{U_n > y} \geqslant &\ \bp{U_n > y,\ U_n = U'_n(e^{-p}/8)}\\
\geqslant &\ \bp{U'_n(e^{-p}/8) > y} - \bp{U_n \neq U'_n(e^{-p}/8)}
> \eta - \frac{e^{-p}}{8}.
\end{align*}

\noindent Hence by definition and the above calculations,
\begin{equation}\label{Ehm}
U'_n(e^{-p}/8)^*(\eta) \leqslant U_n^*(\eta - e^{-p}/8).
\end{equation}

\noindent Applying this with $\eta = e^{-p}/4$,
\[
U'_n(e^{-p}/8)^*(e^{-p}/4) \leqslant U_n^*(e^{-p}/8) \leqslant e 8^{1/p}
\be_\mu[U_n^p]^{1/p} \leqslant e 8^{1/p_0} \be_\mu[U_n^p]^{1/p}.
\]

\noindent Hence as above, there exists a constant $0 < c_2 = c_2(p_0)$
such that:
\[
\be_\mu[U_n^p]^{1/p} \geqslant c_2^{-1} ( U'_n(e^{-p}/8)^*(e^{-p}/4) +
\be[\ell_X^p]^{1/p}).
\]

\noindent This proves the second of the four claimed inequalities. The
remaining arguments can now be shown by suitably adapting the proof of
\cite[Theorem 3]{HM}.
\end{proof}

Finally, we use Theorem~\ref{Tbounds} to prove our remaining main result.

\begin{proof}[Proof of Theorem~\ref{Tupq}]
Using Proposition~\ref{Cstrict}, let $\G'$ denote the smallest metric
monoid containing $\G$. Thus the $X_k$ are a sequence of independent
$\G'$-valued random variables, and we may assume henceforth that $\G =
\G'$. Compute using Proposition~\ref{Pbounds}, and the fact that $X^*$
and $X$ have the same law for the real-valued random variable $X = M_n$:
\begin{align*}
\be[\ell_X^q] = &\ \int_0^{1/2} \ell_X(2t)^q \cdot 2 dt \leqslant 2
\int_0^{1/2} M^*_n(t)^q\ dt \leqslant 2 \int_0^1 M^*_n(t)^q\ dt = 2
\mathbb{E}[(M^*_n)^q]\\
= &\ 2 \be_\mu[M_n^q].
\end{align*}

Using this computation, as well as Lemma~\ref{Lbounds} and
Theorem~\ref{Tbounds} for $\G'$, we compute:
\begin{align*}
&\ \be_\mu[U_n^q]^{1/q}\\
\leqslant &\ c'_1 (\be[\ell_X^q]^{1/q} + U_n^*(e^{-q}/4))\\
\leqslant &\ c'_1 \cdot 2^{1/q} \be_\mu[M_n^q]^{1/q} + c'_1 c_1
\frac{\log(4e^q)}{\max(\log(4e^p), \log \log (16e^q))} (U_n^*(e^{-p}/4) +
M_n^*(e^{-q}/8))\\
\leqslant &\ c'_1 \cdot 2^{1/q} \be_\mu[M_n^q]^{1/q} + c'_1 c_1
\frac{\log(4e^q)}{\max(\log(4e^p), \log(\epsilon + q))} (c_2
\be_\mu[U_n^p]^{1/p} + M^*_n(e^{-q}/8))
\end{align*}

\noindent since $\epsilon \in (-q, \log(16)]$. There are now two cases:
first if $e^p \geqslant \epsilon+q$, then
\[
\frac{\log(4e^q)}{\max(\log(4e^p), \log(\epsilon + q))} \leqslant \frac{q
+ \log(4)}{p + \log(4)} \leqslant \frac{q}{p} = \frac{q}{\max(p,
\log(\epsilon + q))}.
\]

\noindent On the other hand, if $e^p < \epsilon + q$ then set $C := 1 +
\frac{\log(4)}{p_0}$ and note that $Cq \geqslant q + \log(4)$. Therefore,
\[
\frac{\log(4e^q)}{\max(\log(4e^p), \log(\epsilon + q))} \leqslant \frac{q
+ \log(4)}{\log(\epsilon + q)} \leqslant \frac{C q}{\log(\epsilon + q)} =
C \frac{q}{\max(p, \log(\epsilon + q))}.
\]

Using the above analysis now yields:
\begin{align*}
&\ \be_\mu[U_n^q]^{1/q}\\
\leqslant &\ c'_1 \cdot 2^{1/q} \be_\mu[M_n^q]^{1/q} + c'_1 c_1 \left( 1
+ \frac{\log(4)}{p_0} \right) \frac{q}{\max(p, \log(\epsilon+q))} (c_2
\be_\mu[U_n^p]^{1/p} + M^*_n(e^{-q}/8)).
\end{align*}

\noindent Setting $c := c'_1 \max(2^{1/p_0}, c_1(1 + \log(4)/p_0), c_1
c_2 (1 + \log(4)/p_0))$, we obtain the first inequality claimed in the
statement of the theorem.

To show the second inequality, we first verify that if $\epsilon
\geqslant \min(1, e - p_0)$, then the function $f(x) :=
x/\log(\epsilon+x)$ is strictly increasing on $(p_0,\infty)$. 
%
%
Now compute:
\begin{align*}
\frac{q}{\max(p, \log(\epsilon+q))} = &\ \min \left( \frac{q}{p},
\frac{q}{\log(\epsilon+q)} \right) \geqslant \min \left( 1,
\frac{q}{\log(\epsilon+q)} \right)\\
\geqslant &\ \min \left( 1, \frac{p_0}{\log(\epsilon+p_0)} \right).
\end{align*}

Next, use Proposition~\ref{Pdecrear} to show: $M^*_n(e^{-q}/8)
\leqslant \be_\mu[M_n^q]^{1/q} (8e^q)^{1/q} \leqslant 8^{1/p_0} e
\be_\mu[M_n^q]^{1/q}$. Using the previous two facts, we now complete the
proof of the second inequality by beginning with the first inequality:
\begin{align*}
&\ \be_\mu[U_n^q]^{1/q}\\
\leqslant &\ c \frac{q}{\max(p, \log(\epsilon+q))} ( \be_\mu[U_n^p]^{1/p}
+ M_n^*(e^{-q}/8)) + c \be_\mu[M_n^q]^{1/q}\\
\leqslant &\ c \frac{q}{\max(p, \log(\epsilon+q))} (\be_\mu[U_n^p]^{1/p}
+ 8^{1/p_0} e \be_\mu[M_n^q]^{1/q}) + c  \cdot 1 \cdot
\be_\mu[M_n^q]^{1/q}\\
\leqslant &\ c \frac{q}{\max(p, \log(\epsilon+q))} \left(
\be_\mu[U_n^p]^{1/p} + 8^{1/p_0} e \be_\mu[M_n^q]^{1/q} + \max(1,
\frac{\log(\epsilon+p_0)}{p_0}) \be_\mu[M_n^q]^{1/q} \right).
\end{align*}

\noindent The second inequality in the theorem now follows.
\end{proof}

\subsection*{Acknowledgments}
We thank David Montague and Doug Sparks for providing detailed feedback
on an early draft of the paper, which improved the exposition.
We also thank the referees for a careful reading of the paper.



\end{document}